\newtheoremstyle{dtheoremnopar}{3 mm}{1 mm}{\itshape}{}{\bfseries}{.}{ }
{\thmname{#1}\thmnumber{ #2}\thmnote{ \mdseries(#3)\bfseries}}
\theoremstyle{dtheoremnopar}
\newcounter{theoremx}
\newtheorem{theoremalpha}[theoremx]{Theorem}
\newcommand{\tref}[1]{\ref{#1}} 
\newcommand{\spref}[1]{\href{http://stacks.math.columbia.edu/tag/#1}{#1}}
\newcommand{\spcite}[1]{\cite[\spref{#1}]{stacks-project}}
\newcommand\NN{\mathbb{N}}
\newcommand\QQ{\mathbb{Q}}
\newcommand\inj{\hookrightarrow}
\newcommand\surj{\twoheadrightarrow}
\DeclareMathOperator{\Mor}{Mor}
\DeclareMathOperator{\Ext}{Ext}
\DeclareMathOperator{\Ann}{Ann}
\DeclareMathOperator{\coker}{coker}
\newcommand\sF{\mathcal{F}}
\newcommand\sG{\mathcal{G}}
\newcommand\sL{\mathcal{L}}
\newcommand\sN{\mathcal{N}}
\newcommand\sO{\mathcal{O}}
\newcommand\sP{\mathcal{P}}
\newcommand\im{\mathfrak{m}}
\newcommand\PP{\mathbb{P}}           
\renewcommand\AA{\mathbb{A}}    
\DeclareMathOperator{\Spec}{Spec}
\DeclareMathOperator{\Proj}{Proj}
\newcommand\red{\mathrm{red}}   
\newcommand\cons{\mathrm{cons}} 
\newcommand\Hilb{\mathrm{Hilb}}
\newcommand\Quot{\mathrm{Quot}}
\newcommand\Set{\mathbf{Set}}
\newcommand\Sch{\mathbf{Sch}}
\DeclareMathOperator{\Flat}{Flat} 
\DeclareMathOperator{\rk}{rk}
\DeclareMathOperator{\Bl}{Bl}
\newcommand{\coho}[1]{\mathcal{H}^{{#1}}}
\newcommand{\op}{\mathrm{op}}
\newcommand{\h}{\mathrm{h}} 
\newcommand{\sh}{\mathrm{sh}} 
\begin{document}

\title[Functorial flatification]{Functorial flatification of proper morphisms}
\author{David Rydh}
\address{KTH Royal Institute of Technology\\Department of 
  Mathematics\\SE-100 44 Stockholm\\Sweden}
\email{dary@math.kth.se}
\date{Jan 9, 2025}
\subjclass[2020]{Primary 14B25; Secondary 14E05, 14E22}
\thanks{Supported by the Swedish Research Council 2011-5599 and
the G\"oran Gustafsson Foundation for Research in Natural Sciences and Medicine.}

\begin{abstract}
For proper morphisms, we give a functorial flatification algorithm by blow-ups
in the spirit of Hironaka's flatification algorithm.
In characteristic zero, this gives
functorial flatification by blow-ups in smooth centers. We also give a
functorial \'etalification algorithm by Kummer blow-ups in characteristic
zero.
\end{abstract}

\maketitle


\setcounter{secnumdepth}{0}
\begin{section}{Introduction}
The celebrated flatification theorem of
Raynaud--Gruson~\cite[Thm.~5.2.2]{raynaud-gruson} states that any morphism
$f\colon X\to S$ of finite type between schemes can be flatified by a sequence
of blow-ups. This sequence depends on many choices and is highly
non-canonical. Extending this result to the case where $S$ is an algebraic
space or an algebraic stack is therefore non-trivial. This was nevertheless
accomplished for algebraic spaces by
Raynaud--Gruson~\cite[Thm.~5.7.9]{raynaud-gruson}, for stacks with finite
stabilizers in~\cite{rydh_compactification-tame-stacks} and for arbitrary
stacks in~\cite{rydh_equivariant-flatification}.

Shortly after Raynaud--Gruson's result, Hironaka gave a different proof of the
theorem when $f$ is a \emph{proper} morphism of analytic
spaces~\cite{hironaka_flattening}. Hironaka's proof is long and complicated but
the basic idea is simple. Inspired by his ideas, we give the following
\emph{functorial} flatification result:

\begin{theoremalpha}\label{T:FLATIFICATION}
Let $f\colon X\to S$ be a proper morphism of noetherian schemes. Let $U\subseteq
S$ be the largest open substack such that $f|_U$ is flat. Then there exists a
sequence of blow-ups $\widetilde{S}\to S$ with centers disjoint from $U$ such
that the strict transform $\widetilde{f}\colon \widetilde{X}\to \widetilde{S}$
is flat. Moreover, this sequence is \emph{functorial} with respect to flat
morphisms $S'\to S$ between noetherian schemes.
\end{theoremalpha}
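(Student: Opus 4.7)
The plan is to adapt Hironaka's original strategy to the scheme setting: identify a canonical closed subscheme $Z \subseteq S \setminus U$ supported on the non-flat locus, blow it up, pass to the strict transform of $X$, and iterate. Since $Z$ and the blow-up will be defined intrinsically in terms of $f$, functoriality under flat base change should follow from the fact that flatness, coherent pushforwards, and blow-ups are all compatible with flat pullback.

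The first step is to produce a canonical flattening stratification. Since $f$ is proper and $S$ is noetherian, the direct images $f_*\sF$ of coherent sheaves on $X$ are coherent and behave compatibly under flat base change. This yields a finite locally closed stratification $S = \bigsqcup_\alpha S_\alpha$ over whose strata $f$ becomes flat, together with a discrete invariant $\alpha \in I$ (a Hilbert polynomial when a relatively ample line bundle is available locally on $S$, or more intrinsically a tuple of ranks of $f_*\sF_n$ for a canonical sequence of coherent sheaves $\sF_n$ on $X$). The open stratum corresponds to the smallest invariant and coincides with $U$.

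I would then take $Z$ to be the scheme-theoretic closure of the stratum of maximal invariant, equipped with its canonical (say, flattening-ideal) scheme structure. Blow up $S$ along $Z$ and form the strict transform $X_1 \to S_1$. The key technical lemma to establish is that this blow-up strictly lowers the maximum of the invariant, or at least strictly shrinks the locus where the maximum is attained; this is checked locally on a blow-up chart, where the fibers of the strict transform can be controlled by how the invariant transforms under the substitution $\sO_S \twoheadrightarrow \sO_S/I_Z$ becoming a principal ideal. Granted this, noetherian induction on the lexicographically ordered pair (maximum invariant, dimension of its locus) terminates in finitely many steps and yields a flat strict transform $\widetilde{f}\colon \widetilde{X} \to \widetilde{S}$ with every center disjoint from $U$.

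Functoriality along a flat morphism $S' \to S$ would then be essentially automatic from the intrinsic nature of the construction: the flattening stratification, the invariant, and thus each successive center, are preserved by flat pullback, and blow-ups commute with flat base change. The step I expect to be the main obstacle is the choice of an intrinsic invariant and canonical center $Z$ in the \emph{non-projective} proper case, where no global Hilbert polynomial is available; here one likely has to appeal to the Raynaud--Gruson flattening to extract a canonical closed subscheme $Z \subseteq S$ whose formation commutes with flat base change, independently of auxiliary choices. A secondary delicate point is verifying that the invariant genuinely drops under the blow-up step, which is the heart of Hironaka's argument and where I expect most of the calculational work to lie.
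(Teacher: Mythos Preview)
Your proposal correctly identifies the structure of the argument and even correctly identifies where it breaks down, but the gap you flag is exactly the content of the paper, and your suggestions for filling it do not work.

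The invariant you propose (Hilbert polynomials, or tuples of ranks of higher direct images $f_*\sF_n$) gives a canonical flattening stratification only when $f$ is projective, or at least locally projective on $S$. For a merely proper $f$ there is in general no such invariant: the paper exhibits explicit examples (Hironaka's nodal curve construction and Kresch's variant) where the universal flattening $\Flat_{\sO_X/S}\to S$ is \emph{not} a disjoint union of locally closed subschemes at all. So there is no ``stratum of maximal invariant'' whose closure you could blow up, and no global closed subscheme $Z\subseteq S$ with the universal flattening property you want. Raynaud--Gruson only supplies this $Z$ \'etale-locally around each point of $S$; these local $Z$'s need not glue (again, the nodal examples).

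What the paper actually does is replace your missing canonical center by the monomorphism $\Flat_{\sF/S}\hookrightarrow S$ itself, which is representable and of finite type but not a closed immersion. The new idea is a procedure (``resolving the monomorphism'', Proposition~\ref{P:resolving-mono}) that, by a functorial sequence of blow-ups guided by the canonical flattening \emph{filtration} (not stratification) and the \'etale envelope of the monomorphism, makes $\Flat_{\sF/S}$ into a local regular embedding of codimension $\leq 1$. Only then is there a well-defined Cartier divisor to work with. The improvement step (your ``invariant drops'') is established via d\'evissage (Theorem~\ref{T:devissage}, Lemma~\ref{L:improvement-after-blowup}): it shows that on the non-flat locus the strict transform has strictly smaller fibers as coherent sheaves. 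Termination is then argued not by your noetherian induction on an invariant, but by a compactness argument on $\varprojlim_n |S_n|$ together with the descending chain condition on coherent sheaves on a single fiber $X\times_S k$.

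In short: your outline is the projective algorithm, which the paper also sketches and contrasts with, and the step you defer (``choice of an intrinsic invariant and canonical center $Z$ in the non-projective proper case'') is precisely where a new idea is required; the paper's contribution is to supply that idea via the flattening monomorphism and its resolution.
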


Using functorial embedded resolution of
singularities~\cite{bierstone-milman_functoriality-in-res-of-sing}, we
immediately obtain the following smooth variant:

\begin{theoremalpha}\label{T:FLATIFICATION-SMOOTH}
Let $k$ be a field of characteristic zero and let $S$ be a smooth $k$-scheme of
finite type. Let $f\colon X\to S$ be a proper morphism. Let $U\subseteq S$ be
the largest open substack such that $f|_U$ is flat. Then there exists a sequence
of blow-ups $\widetilde{S}\to S$ with \emph{smooth} centers disjoint from $U$
such that the strict transform
$\widetilde{f}\colon \widetilde{X}\to \widetilde{S}$ is flat. Moreover, this
sequence is \emph{functorial} with respect to smooth morphisms $S'\to S$ of
finite type.
\end{theoremalpha}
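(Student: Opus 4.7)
The plan is to combine Theorem~\tref{T:FLATIFICATION} with the functorial principalization of ideals by smooth blow-ups due to Bierstone--Milman. First, I apply Theorem~\tref{T:FLATIFICATION} to obtain a functorial sequence of blow-ups $\pi \colon \widetilde{S} \to S$ with centers disjoint from $U$ such that the strict transform $\widetilde{f} \colon \widetilde{X} \to \widetilde{S}$ is flat. The morphism $\pi$ is projective birational between noetherian schemes and is therefore canonically isomorphic to the blow-up $\Bl_{\mathcal{I}} S$ along a coherent ideal sheaf $\mathcal{I} \subseteq \sO_S$ with $V(\mathcal{I}) \subseteq S \setminus U$.

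Since $S$ is smooth of finite type over $k$, functorial principalization applies to $\mathcal{I}$: it yields a sequence $\rho \colon \widehat{S} \to S$ of blow-ups in smooth centers, all contained in $V(\mathcal{I}) \subseteq S \setminus U$, such that $\mathcal{I} \cdot \sO_{\widehat{S}}$ is invertible and $\widehat{S}$ is smooth. By the universal property of blow-ups, $\rho$ factors through $\pi$ via a morphism $\sigma \colon \widehat{S} \to \widetilde{S}$, which is an isomorphism over $U$. Writing $\widehat{f} \colon \widehat{X} \to \widehat{S}$ for the strict transform of $f$ along $\rho$, the compatibility of strict transforms under compositions of modifications identifies $\widehat{f}$ with the strict transform of $\widetilde{f}$ along $\sigma$.

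To conclude, since $\widetilde{f}$ is flat and proper and $\sigma$ is a birational morphism that is an isomorphism over a dense open of the reduced scheme $\widetilde{S}$, the strict transform of $\widetilde{f}$ along $\sigma$ coincides with the flat base change $\widetilde{X} \times_{\widetilde{S}} \widehat{S} \to \widehat{S}$, so $\widehat{f}$ is flat. Functoriality of $\rho$ with respect to smooth morphisms $S' \to S$ of finite type follows from the flat functoriality of Theorem~\tref{T:FLATIFICATION} combined with the smooth functoriality of Bierstone--Milman's principalization. The main obstacle is verifying this coincidence of strict transform and base change: one must show that $\widetilde{X} \times_{\widetilde{S}} \widehat{S}$ has no associated components contained in the exceptional locus of $\sigma$, which is ensured by the flatness of $\widetilde{f}$ together with the reducedness of $\widetilde{S}$ (a blow-up of the smooth scheme $S$).
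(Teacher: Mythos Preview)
Your overall strategy---combine Theorem~\tref{T:FLATIFICATION} with functorial principalization of ideals---is exactly the paper's. The existence part of your argument is fine: the composite $\pi\colon\widetilde{S}\to S$ is projective, hence isomorphic to some $\Bl_{\mathcal{I}}S$; principalizing $\mathcal{I}$ gives $\widehat{S}\to S$ by smooth blow-ups factoring through $\widetilde{S}$; and your check that strict transform along $\sigma$ agrees with flat base change is correct (since $\widetilde{f}$ is flat and $\widehat{S}$ is smooth, hence reduced, associated points of $\widetilde{X}\times_{\widetilde{S}}\widehat{S}$ lie over generic points of $\widehat{S}$, hence in $U$).

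The gap is in functoriality. Your sentence ``is therefore canonically isomorphic to the blow-up $\Bl_{\mathcal{I}}S$'' is not justified: the theorem that a projective birational morphism is a blow-up requires a choice of relatively ample line bundle, and the resulting ideal $\mathcal{I}$ depends on that choice. A composition of $n$ blow-ups carries $n$ exceptional divisors, and a relatively ample combination $\sum a_k E_k$ involves non-canonical weights $a_k$. Without a functorial assignment $S\mapsto\mathcal{I}(S)$ satisfying $g^*\mathcal{I}(S)=\mathcal{I}(S')$ for smooth $g\colon S'\to S$, you cannot conclude that the principalization sequences for $\mathcal{I}(S)$ and $\mathcal{I}(S')$ match.

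The paper avoids this by never collapsing the sequence to a single ideal. If Theorem~\tref{T:FLATIFICATION} blows up $I_1,I_2,\dots,I_n$ (with $I_k$ an ideal on $S_{k-1}$), one principalizes $I_1$ on the smooth scheme $S$ to obtain a smooth $T_1\to S$; the universal property of $\Bl_{I_1}S$ gives a canonical factorization $T_1\to S_1$, and one then principalizes the inverse image $I_2\sO_{T_1}$ on the smooth scheme $T_1$; and so on. Each step is functorial because the $I_k$ are functorial (Theorem~\tref{T:FLATIFICATION}), the factorization through $S_k$ is unique, and principalization is smooth-functorial. This iterative procedure is the missing ingredient in your argument.
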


By functoriality, these results immediately extend to algebraic stacks. They also
extend to proper non-representable morphisms and even to non-separated
universally closed morphisms (e.g., good moduli space morphisms).  There is
also functorial flatification of coherent sheaves of $\sO_X$-modules. See
Theorem~\ref{T:functorial-flattening} for the precise statement.

A very important use case is flatification of
\emph{modifications}, i.e., representable proper birational maps. The key point
is that a flat modification is an isomorphism. For \emph{stack-theoretic
modifications}, i.e., non-representable proper birational maps, this is no
longer the case. Prominent examples are root stacks. But an \'etale
stack-theoretic modification is an isomorphism.

To
make a flat ramified map \'etale, we need something more than blow-ups. In this
paper, we only treat \'etalification in characteristic zero and then
\emph{Kummer blow-ups} are enough. A Kummer blow-up is a blow-up followed by a
\emph{root stack} along the exceptional divisor. In addition, we only treat
the smooth case:

\begin{theoremalpha}[Functorial \'etalification of proper morphisms in
    characteristic zero]\label{T:ETALIFICATION}
  Let $S$ be a noetherian algebraic stack, smooth over a field of
  characteristic zero. Let $f\colon X\to S$ be a proper morphism with finite
  diagonal. Let $U\subseteq S$ be the largest open substack such that
  $f|_U$ is \'etale. Then there exists a commutative diagram
\[
\xymatrix{%
\widetilde{X}\ar[d]_{\widetilde{f}}\ar[r]^{q} & X\ar[d]^{f}\\
\widetilde{S}\ar[r]^{p} & S\ar@{}[ul]|\circ
}
\]
where $\widetilde{f}$ is \'etale, $p$ is a sequence of Kummer blow-ups with
smooth
centers disjoint from $U$ and $q$ is a sequence of Kummer blow-ups with
centers disjoint from $f^{-1}(U)$.  Moreover, these sequences are
\emph{functorial} with respect to smooth morphisms $S'\to S$ of finite type.
\end{theoremalpha}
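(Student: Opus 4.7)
The approach is to combine Theorem~\tref{T:FLATIFICATION-SMOOTH} with a characteristic-zero Abhyankar argument: first flatify $f$ by smooth blow-ups, then kill the resulting tame ramification by root stacks along a simple normal crossings branch divisor.

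First I would apply Theorem~\tref{T:FLATIFICATION-SMOOTH} to $f\colon X\to S$, which by functoriality extends to the stacky setting. This yields a sequence of blow-ups $p_1\colon S_1\to S$ in smooth centers disjoint from $U$ such that the strict transform $f_1\colon X_1\to S_1$ is flat. Since $f_1|_U$ is \'etale and $f_1$ is flat and proper with finite diagonal, upper-semicontinuity of fiber dimension forces $f_1$ to have fiber dimension zero on every component of $S_1$ meeting $U$; combined with finite diagonal, $f_1$ is then ``finite flat'' in the stacky sense and, being in characteristic zero, tamely ramified along some branch locus $B\subseteq S_1$. By Zariski--Nagata purity on the smooth stack $S_1$, $B$ is a divisor disjoint from $U$. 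Next I would apply Bierstone--Milman functorial embedded resolution~\cite{bierstone-milman_functoriality-in-res-of-sing} to produce a sequence $p_2\colon S_2\to S_1$ of smooth blow-ups (centers disjoint from $U$) making the total transform of $B$ a simple normal crossings divisor $\sum_i D_i$. Let $e_i$ be the tame ramification index of $f_1\circ p_2$ along $D_i$, and form the iterated root stack $\widetilde{S}\to S_2$ of degree $e_i$ along $D_i$. By a stack-theoretic Abhyankar lemma---root stacks of SNC divisors kill tame ramification of matching index---the pullback $\widetilde{f}\colon \widetilde{X}:=X_1\times_{S_1}\widetilde{S}\to\widetilde{S}$ is \'etale.

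The composite $p\colon\widetilde{S}\to S$ is a sequence of Kummer blow-ups with smooth centers disjoint from $U$: the blow-ups $p_1$ and $p_2$ contribute the blow-up parts, each followed by a trivial degree-$1$ root stack, while the final root stack along $\sum D_i$ is realized, for each $i$, by the (isomorphism) blow-up along the smooth divisor $D_i$ followed by the degree-$e_i$ root stack of the resulting exceptional divisor. The map $q\colon\widetilde{X}\to X$ decomposes similarly, using the strict transforms of the same centers together with root stacks of matching degrees along the $f^{-1}(D_i)$; the centers are disjoint from $f^{-1}(U)$ by construction. Functoriality with respect to smooth $S'\to S$ is inherited from that of Theorem~\tref{T:FLATIFICATION-SMOOTH}, Bierstone--Milman resolution, the branch locus, and the ramification indices. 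The principal obstacle is this last step: verifying a stack-theoretic Abhyankar lemma for $f_1$ (which need not be representable, only to have finite diagonal) and showing that on $X$ the induced modifications assemble into a \emph{sequence of Kummer blow-ups} (rather than merely a fiber product with the Kummer blow-ups on $S$). This requires careful bookkeeping of the multiplicities $f_1^*D_i = e_i D'_i$ along the exceptional divisors, and is where the characteristic-zero hypothesis enters essentially via tameness.
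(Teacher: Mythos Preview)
Your outline follows the same broad strategy as the paper---flatify, resolve the branch locus to simple normal crossings, take root stacks---but there is a genuine gap at the Abhyankar step. You define $\widetilde{X}:=X_1\times_{S_1}\widetilde{S}$ as a bare fiber product, and claim this is \'etale over $\widetilde{S}$. It is not. Already in the toy case $f_1\colon\Spec k[x]\to\Spec k[y]$, $y=x^2$, pulling back along the square-root stack of the origin gives $\Spec k[x,t]/(x^2-t^2)$ modulo $\mu_2$, which is nodal over $\widetilde{S}$, not \'etale. The Abhyankar lemma produces an \'etale cover only after \emph{normalizing} the pullback; in general $X_1$ is not even normal after flatification, and your formula $f_1^*D_i=e_iD_i'$ for a single reduced $D_i'$ has no reason to hold. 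Relatedly, your invocation of Zariski--Nagata purity to force the branch locus to be a divisor needs the source to be regular (or at least normal), which you have not arranged.

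The paper fixes this by inserting a normalization $n\colon\widetilde{X}\to X$ after the flatification step; it is the normalization whose ramification indices are computed and which, after root stacks, becomes \'etale in codimension $1$ and then \'etale everywhere by Zariski--Nagata. But normalization is not a Kummer blow-up, so a further maneuver is required to obtain the map $q$ in the statement: the paper applies Theorem~\tref{T:FLATIFICATION} to the finite birational map $n$ itself, producing a functorial sequence of blow-ups $X'\to X$ over which $n$ becomes an isomorphism, and then re-applies Theorem~\tref{T:FLATIFICATION-SMOOTH} on $S$ to flatify the resulting $X'\to S$; since a flat modification onto an \'etale target is an isomorphism, this exhibits $q$ as a genuine sequence of Kummer blow-ups. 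Your proposal is missing both the normalization and this ``replace normalization by blow-ups'' step. (Two minor points you also skip: one must first blow up $X\smallsetminus f^{-1}(U)$ to make $f^{-1}(U)$ schematically dense, and blow up the components of $S$ missed by $U$; otherwise the fiber-dimension argument and the strict-transform bookkeeping break down.)
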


\subsection*{Applications}
Our first application is that any (stack-theoretic) modification becomes a (Kummer) blow-up
after
replacing the source with a (Kummer) blow-up and this procedure is functorial
(Theorems~\ref{T:blowups-cofinal} and~\ref{T:Kummer-blowups-cofinal}).

Our second application is that the indeterminacy locus of a
birational map $f\colon X\dashrightarrow Y$ has a functorial resolution
by (Kummer) blow-ups when the target is
proper. Resolving the indeterminacy locus reduces the problem of weak
factorization of $f$ to the situation where $f$ is a sequence of
blow-ups. The latter situation can be solved functorially, resulting in
a proof of weak factorization of birational morphisms of Deligne--Mumford
stacks in characteristic zero that is completely
functorial~\cite{rydh_weak-factorization}.
It is also likely that the algorithm
in~\cite{abramovich-etal_weak-factorization} becomes functorial if
Hironaka's flatification theorem is replaced with ours.

Our third application is a general Chow lemma (Theorem~\ref{T:Chow-lemma}).

\subsection*{On the proofs}
For \emph{projective} morphisms there is an easy functorial flatification
algorithm. There is a canonical stratification of $S$ in locally closed
substacks $S_P$ indexed by Hilbert polynomials. If one blows up
$\overline{S_P}$ then the non-flatness over $S_P$ improves
(Lemma~\ref{L:improvement-after-blowup}). We obtain a functorial algorithm by
blowing up the $\overline{S_P}$ starting with the largest $P$ and continuing
in decreasing order.

In general, there is also a minimal modification that flatifies $X\to S$.
This is obtained by taking the closure of $U$ in the Hilbert scheme 
$\Hilb(X/S)$ (or the Quot scheme $\Quot(\sF/X/S)$ for coherent sheaves).
This, however, only gives a flattening modification, not a sequence
of blow-ups.

Hironaka's algorithm uses the following three key facts:
\begin{enumerate}
\item There is a canonical \emph{filtration} of $S$ in open
  subschemes such that $f$ is flat over the reduced strata. This follows from
  generic flatness since $S$ is noetherian
  (Remark~\ref{R:flattening-filtration}). In the complex-analytic setting
  this is a theorem of Frisch.
\item \'Etale-locally around any point $s\in S$, there is a maximal closed
  subscheme $Z$ of $S$ passing through $s$ with the following universal
  property: $f|_Z$ is flat and any morphism $g\colon S'\to S$ from a connected
  scheme $S'$ such that $f_{S'}$ is flat and $s\in g(S')$, factors through
  $Z$.
\item Blowing up $Z$ as in (ii) improves the non-flatness at every point of the
  inverse image of $Z$.
\end{enumerate}
Raynaud--Gruson proves (ii) for schemes using their method of
d\'evissage \cite[Thms.~4.1.2]{raynaud-gruson}. Hironaka proves (ii) and (iii)
for analytic spaces using pseudo-free presentations which is a
complex-analytic analogue of the d\'evissage
method~\cite[Thm.~2.4]{hironaka_flattening}. In our treatment, we deduce (ii)
from the existence of the universal flattening
(Proposition~\ref{P:univ-flattening-representable}) without d\'evissage. To
establish (iii), however, d\'evissage is used
(Lemma~\ref{L:improvement-after-blowup}).

The ultimate goal of both Hironaka's algorithm and our algorithm is to
make $Z$ globally defined so that it can be blown-up. We do this by
``resolving the flattening monomorphism'' (Proposition~\ref{P:resolving-mono})
and this uses the canonical flattening filtration.
Hironaka accomplishes this in~\cite[\S4]{hironaka_flattening}, also using
the canonical flattening filtration but with a more complicated
algorithm. His blow-ups are in \emph{permissible} centers $D$.
In particular, $f|_D$ is flat whereas this is essentially never the case in
our algorithm (Example~\ref{EX:algorithm}).
Hironaka also assumes that $S$ is reduced.

Functorial \'etalification is proven in
Section~\ref{S:functorial-etalification} and follows from functorial
flatification and the generalized Abhyankar lemma. 

\subsection*{Remarks}

\subsubsection*{D\'evissage and flatness}
The d\'evissage of Raynaud--Gruson is a somewhat complicated machinery.
We use d\'evissage to give a very simple characterization of flatness
(Theorem~\ref{T:devissage}) that could be of independent interest.

\subsubsection*{Functoriality of Hironaka's algorithm}
In~\cite[1.2.4]{abramovich-etal_weak-factorization}, the authors remark that
Hironaka's algorithm does not depend on any choices and is invariant under
isomorphisms but is not functorial with respect to smooth morphisms, nor under
localizations. A crucial point where it is seems that the algorithm is not
functorial with respect to localizations is when picking $\beta\geq \alpha$ to
form the center $D_\beta$~\cite[p.~542 and Lem.~3.2]{hironaka_flattening}.

Hironaka's example of a non-projective
threefold (Example~\ref{EX:Hironaka}) gives rise to a non-projective but
locally projective birational morphism $f\colon X\to Y$. The na\"ive
flatification algorithm for projective morphisms, using flattening
stratifications, does not glue to a flatification algorithm for $f$. However,
even in the projective case, Hironaka's algorithm does not equal the na\"ive
flatification algorithm (Example~\ref{EX:algorithm}).
Thus, this example does not prove that Hironaka's algorithm
is not functorial (as~\cite[1.2.4]{abramovich-etal_weak-factorization} seems
to suggest).

\subsubsection*{Functorial \'etalification for singular stacks}
In \cite{rydh_compactification-tame-stacks,rydh_equivariant-flatification},
\'etalification
is proved in characteristic zero and also in positive characteristic
under tameness assumptions.
There is no smoothness assumption but the algorithm is highly non-functorial.
It is natural to ask whether there is a functorial \'etalification algorithm
for proper maps without the smoothness assumption. Ideally, one would have
something that guides the algorithm analogously to the universal flattening.

\subsubsection*{Functorial \'etalification in positive characteristic}
The author has proved \'etalification in positive characteristic using
Artin--Schreier stacks~\cite{rydh_OWR-ramification}.
Again, this is highly non-functorial
and one could ask whether there is a functorial algorithm. This seems very
difficult.

\subsubsection*{Noetherian assumption}
This article is throughout written in the context of noetherian schemes and
stacks.  Although the majority of the paper is true in the non-noetherian
setting under suitable finiteness assumptions, there is one crucial place where
the noetherian assumption is necessary. In the noetherian setting we have a
\emph{canonical} flattening filtration
(Remark~\ref{R:flattening-filtration}). In the non-noetherian setting, we
have non-canonical flattening quasi-compact filtrations,
see~\cite[Thm.~8.3]{rydh_approximation-sheaves}. These can be obtained by
pulling-back the canonical flattening filtration of a noetherian
approximation but are not unique. This is essentially due to the fact that the
map $S\to S_0$ from a non-noetherian scheme to a noetherian approximation
$S_0$ is not flat. Similarly, we cannot deduce a non-noetherian version of
the main theorem by noetherian approximation since the result is only functorial
with respect to flat morphisms.
In fact, we give a non-noetherian example for which there cannot be a
functorial flattening algorithm by blow-ups
(Example~\ref{E:non-noetherian-counter-example}).

\subsection*{Comments}
This paper was mostly written in 2015 after reading Hironaka's paper
\cite{hironaka_flattening} with the purpose of obtaining \emph{functorial}
weak factorization of Deligne--Mumford stacks~\cite{rydh_weak-factorization}.
The paper was then finished almost ten years later.

There is a recent preprint by Michael McQuillan~\cite{mcquillan_flattening},
also inspired by Hironaka's paper, which gives a smooth-functorial
flatification similar to Theorem~\ref{T:FLATIFICATION}, also including formal
stacks.
The proofs are similar but McQuillan uses a ``minimal compactification'' of
the flattening monomorphism instead of the \'etale envelope and \'etale
d\'evissage employed in the proof of Proposition~\ref{P:resolving-mono}.
%

\subsection*{Acknowledgments}
The author would like to thank Ludvig Modin and Alessandro D'Angelo for useful
comments and Cristina Manolache for pointing out McQuillan's preprint.

\end{section}
\setcounter{secnumdepth}{3}


\begin{section}{Functorial flatification}\label{S:functorial-flatification}
Let $f\colon X\to S$ be a morphism of finite type between noetherian algebraic
stacks and let $\sF$ be a coherent $\sO_X$-module. Let $U$ be the maximal
open substack of $S$ such that $\sF|_{f^{-1}(U)}$ is $U$-flat. If $S$ is reduced,
then $U$ is dense. If $S$ is non-reduced, it may happen that $U=\emptyset$.
The \emph{$f$-torsion} of $\sF$ is the kernel of the adjunction morphism $\sF\to
j_*j^*\sF$ where $j\colon f^{-1}(U)\to X$ is the inclusion. We will frequently
let $\widetilde{\sF}$ denote $\sF$ modulo its $f$-torsion. When $S$ is a smooth
curve, then $\widetilde{\sF}$ is $S$-flat. If $S'\to S$ is a morphism, then we
let $\sF\times_S S'$ denote the pull-back of $\sF$ along $X\times_S S'\to X$.
The strict transform of $\sF$ is $\widetilde{\sF\times_S S'}$, that is, the
sheaf $\sF\times_S S'$ modulo its $f'$-torsion where $f'\colon X'\to S'$ is
the pull-back of $f$.

\begin{theorem}\label{T:functorial-flattening}
Let $S$ be a noetherian algebraic stack, let $f\colon X\to S$ be a universally
closed morphism of finite type (e.g., proper) and let $\sF$ be a coherent
$\sO_X$-module. Let $U\subseteq S$ be
the largest open substack such that $\sF|_{f^{-1}(U)}$ is $U$-flat.
Then there exist a
sequence of blow-ups $S'\to S$ with centers disjoint from $U$ such that
the
strict transform of $\sF$ is $S'$-flat. This sequence is functorial with
respect to flat morphisms $S'\to S$ between noetherian algebraic stacks.

If in addition $S$ is smooth over a field of characteristic zero, then the
centers can be taken to be smooth and this sequence is functorial with respect
to smooth morphisms.
\end{theorem}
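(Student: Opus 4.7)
The plan is to iterate a single canonical blow-up step until $\sF$ is flat, combining the three ingredients laid out in the introduction: the canonical flattening filtration (Remark \ref{R:flattening-filtration}), the universal flattening (Proposition \ref{P:univ-flattening-representable}), and the improvement of non-flatness after blowing up the flattening center (Lemma \ref{L:improvement-after-blowup}). Functoriality with respect to flat morphisms will be built in because every ingredient in each step commutes with flat base change; termination will follow from the noetherian hypothesis once one exhibits a well-founded invariant.

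For one step I would do the following. Proposition \ref{P:univ-flattening-representable} provides a universal flattening, from which fact (ii) of the introduction yields an \'etale-local closed subscheme $Z_s \hookrightarrow S$ through each $s\in S\setminus U$ with the universal property that every flat base change with $s$ in its image factors through $Z_s$. These \'etale-local flattening centers need not glue, but Proposition \ref{P:resolving-mono}, whose input is the canonical flattening filtration, canonically ``resolves the flattening monomorphism'' into a globally defined closed subscheme $Z\subset S$ disjoint from $U$. One step of the algorithm then consists in forming $\Bl_Z S$ and passing to the strict transform of $\sF$; Lemma \ref{L:improvement-after-blowup} guarantees that non-flatness improves strictly above every point of $Z$.

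The main obstacle will be termination. I would attach to $(S,\sF)$ a well-founded invariant measuring non-flatness---for instance, the lexicographic tuple of dimensions of the reduced strata of the canonical flattening filtration on which $\sF$ fails to be flat---and verify, using Lemma \ref{L:improvement-after-blowup}, that this invariant strictly decreases under one step, so that the process terminates after finitely many blow-ups by the noetherian hypothesis. Functoriality with respect to flat morphisms $S'\to S$ of noetherian algebraic stacks is then formal: the flattening filtration, the universal flattening, the resolution of Proposition \ref{P:resolving-mono}, the center $Z$, the blow-up, and the strict transform all commute with flat base change, so the sequence of blow-ups for $S'$ is literally the pullback of the sequence for $S$.

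For the smooth case, I would replace each blow-up $\Bl_Z S$ produced by the first part by the smooth-functorial sequence of blow-ups in smooth centers supplied by functorial embedded resolution of singularities \cite{bierstone-milman_functoriality-in-res-of-sing} applied to the ideal sheaf of $Z\subset S$. Since that resolution is smooth-functorial, is supported over $Z$ (and hence disjoint from $U$), and its final blow-up achieves the same strict transform of $\sF$ as the original blow-up of $Z$, one obtains a flatification by blow-ups in smooth centers disjoint from $U$ that is functorial with respect to smooth morphisms of finite type.
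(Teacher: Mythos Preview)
Your overall strategy matches the paper's: iterate the step ``resolve the flattening monomorphism via Proposition~\ref{P:resolving-mono}, pass to the strict transform, invoke Lemma~\ref{L:improvement-after-blowup}'', and for the smooth case replace each blow-up by a principalization sequence. Functoriality is indeed automatic because every ingredient commutes with flat base change. Two points need correction, one minor and one substantive.

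\textbf{Minor.} Proposition~\ref{P:resolving-mono} does not produce a single globally defined center $Z\subset S$ to blow up. Its output is already a \emph{sequence} of $U$-admissible blow-ups $S'\to S$ after which the pulled-back flattening monomorphism $\Flat_{\sF/S}\times_S S'\to S'$ becomes a local regular embedding of codimension $\leq 1$. One step of the algorithm is precisely this sequence, followed by replacing $\sF$ by its strict transform on $S'$; there is no further blow-up of some $Z$. Lemma~\ref{L:improvement-after-blowup} is then applied to $\sF\times_S S'$ on $S'$, whose flattening monomorphism is now a local regular embedding, and it asserts that the fiber $\sF\otimes\kappa(s')\twoheadrightarrow\widetilde{\sF}\otimes\kappa(s')$ strictly drops for every $s'$ in the non-flat locus.

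\textbf{Substantive: termination.} Your proposed invariant does not work. First, by construction $\sF$ is flat over \emph{every} reduced stratum of the canonical flattening filtration, so ``the strata on which $\sF$ fails to be flat'' is not the right object. More importantly, any invariant built from dimensions of loci in $S_n$ is suspect because blow-ups create exceptional divisors: the non-flat locus can jump from a point to a curve after a single blow-up, so there is no evident lexicographic drop. Lemma~\ref{L:improvement-after-blowup} only controls the \emph{fibers} of $\sF$, pointwise, not the geometry of the non-flat locus in $S_n$. The paper's termination argument exploits exactly this: assuming the process runs forever, the inverse limit $\varprojlim_n |S_n|$ is quasi-compact (via the constructible topology), so the nested closed subsets $Z_n\times_{S_n}\widetilde S$ have nonempty intersection. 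This yields a compatible sequence of points $s_n\in Z_n$, and after choosing an algebraic closure $k$ of $\kappa(s_0)$ one obtains an infinite chain of strict surjections
\[
\sF_0\times_{S_0} k \twoheadrightarrow \sF_1\times_{S_1} k \twoheadrightarrow \cdots
\]
of coherent sheaves on the \emph{fixed} noetherian stack $X\times_S \Spec k$, which is impossible. This compactness-plus-fiberwise-descent argument is the missing idea; you should replace the dimension invariant by it.
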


\begin{remark}
A natural generalization of the latter part of the theorem would be to allow
$S$ to be a regular and quasi-excellent $\QQ$-stack and obtain a sequence of
blow-ups in regular centers, functorial with respect to regular morphisms.
Temkin has proven functorial desingularization of quasi-excellent
$\QQ$-stacks~\cite{temkin_functorial-desingularization-nonembedded-case,temkin_functorial-desingularization-embedded-case}
but unfortunately he has not proven strong
principalization~\cite[\S1.1.10]{temkin_functorial-desingularization-embedded-case}
which would be required to deduce this generalization.
\end{remark}

\begin{example}[Non-noetherian counter-example]\label{E:non-noetherian-counter-example}
Let $S$ be the scheme $\Spec k[x,y_1,y_2,\dots]/(xy_1,xy_2,\dots)$ which
has two irreducible
components, $V(x)$ and $V(y_1,y_2,\dots)$, meeting in a point $P$. Let
$X=V(x)\inj S$ be the inclusion of the first component which is of finite
presentation. Then $\sO_X$ is flat over $U=S\smallsetminus P$. Any functorial
algorithm is necessarily $U$-admissible. But there are no finitely generated
ideals $I$ such that $V(I)=P$, hence no $U$-admissible blow-ups.
\end{example}


\subsection{Universal flattening and flattening stratifications}
A crucial ingredient for the proof is the \emph{universal flattening}
(also called the universal flatificator) of $\sF$,
a monomorphism $\Flat_{\sF/S}\to S$ that universally ``makes $\sF$
flat''. This has appeared many times, e.g., in \cite[Thm.~2]{murre},
\cite[Thms.~4.1.2 \& 4.3.1]{raynaud-gruson},
\cite[\S3, Eqn.~13]{olsson-starr_quot-functors},
\cite{kresch_flat-strat} and \cite[\spref{05MW}, \spref{05UH}]{stacks-project}.

For $f$ and $\sF$ as in Theorem~\ref{T:functorial-flattening},
consider the functor
$\Flat_{\sF/S}\colon\Sch_{/S}^\op\to \Set$ such that $\Flat_{\sF/S}(T)$ is the
one-point set if $\sF\times_S T$ is flat over $T$ and the empty set
otherwise.
This means that a morphism $T\to S$ factors through $\Flat_{\sF/S}$
if and only if $\sF\times_S T$ is flat over $T$ and then the factorization is
unique. The functor $\Flat_{\sF/S}$ is a
\emph{bijective monomorphism} $\Flat_{\sF/S}\inj S$, that is,
\[
\Flat_{\sF/S}(T)\to \Mor_S(T,S)=\{*\}
\]
is injective for every $S$-scheme $T$, and bijective if $T$ is the
spectrum of a field.

\begin{remark}\label{R:flattening-filtration}
There is always a canonical \emph{flattening filtration}. This is a sequence of
open
subsets $U_0=\emptyset \subset U_1 \subset U_2 \subset \dots \subset U_n=|S|$
such that $\sF$ is flat over $(U_i\smallsetminus U_{i-1})_\red$.
Given $U_i$, we define $U_{i+1}$ as follows. Let
$Z_i=(S\smallsetminus U_i)_\red$ and let $V\subseteq Z_i$ be the largest
open subset such that $\sF\times_S V$ is flat over $V$. Then let
$U_{i+1}=V\cup U_i$. Since $S$ is noetherian, it has the ascending chain
condition on open subsets so the sequence is finite.

The flattening filtration gives rise to a canonical reduced \emph{flattening
stratification} $T=\coprod_{i=1}^n (U_i\smallsetminus U_{i-1})_\red$ of $S$.
\end{remark}

\begin{proposition}\label{P:univ-flattening-representable}
$\Flat_{\sF/S}\to S$ is representable and of finite type.
\end{proposition}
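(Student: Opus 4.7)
The plan is to realize $\Flat_{\sF/S}$ as an open subspace of Grothendieck's Quot space, deduce representability and local finite-typeness from the representability of $\Quot$, and then upgrade to finite type via the canonical flattening filtration.

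\textbf{Reductions.} Representability and the property of being of finite type over $S$ can both be checked smooth-locally on $S$, so I reduce to the case where $S=\Spec A$ is an affine noetherian scheme. In this setting the cited references produce $\Quot(\sF/X/S)$ as an algebraic space locally of finite type over $S$ under our assumptions ($f$ universally closed of finite type, $\sF$ coherent).

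\textbf{Identification with an open in $\Quot$.} I identify $\Flat_{\sF/S}$ with the subfunctor $U\subseteq \Quot(\sF/X/S)$ parametrizing those points at which the universal quotient is an isomorphism. In one direction, a $T$-point of $\Flat_{\sF/S}$ is a morphism $T\to S$ making $\sF_T$ flat over $T$, and the identity quotient $\sF_T\twoheadrightarrow \sF_T$ is then a $T$-point of $U$. Conversely, a $T$-point of $U$ is a flat quotient $\sF_T\twoheadrightarrow G$ that happens to be an isomorphism, forcing $\sF_T\cong G$ to be $T$-flat. These maps are mutually inverse because $\Flat_{\sF/S}(T)$ is a singleton whenever nonempty. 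To see that $U$ is open in $\Quot$, note that its complement equals the image in $\Quot$ of $\mathrm{supp}(K)$, where $K=\ker(\sF_\Quot\twoheadrightarrow G_\Quot)$ is the kernel of the universal quotient on $X_\Quot:=X\times_S \Quot$. The support is closed in $X_\Quot$, and its image in $\Quot$ is closed because the structure morphism $X_\Quot\to \Quot$ inherits universal closedness from $f$. Thus $U$, and hence $\Flat_{\sF/S}$, is representable by an algebraic space locally of finite type over $S$.

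\textbf{Finite-type via the flattening filtration.} Remark~\ref{R:flattening-filtration} furnishes a finite canonical flattening filtration $\emptyset=U_0\subsetneq U_1\subsetneq\dots\subsetneq U_n=|S|$ with $\sF$ flat over each reduced stratum $T_i:=(U_i\setminus U_{i-1})_\red$. Each inclusion $T_i\inj S$ therefore factors uniquely through $\Flat_{\sF/S}$, producing a morphism $\varphi\colon\coprod_{i=1}^n T_i\to \Flat_{\sF/S}$. Since $\Flat_{\sF/S}(k)\to S(k)$ is bijective on field-valued points and $\coprod T_i\to S$ is surjective on points, $\varphi$ is surjective on underlying topological spaces; as its source is quasi-compact, so is $\Flat_{\sF/S}$. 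Combined with the previous step, this shows $\Flat_{\sF/S}\to S$ is of finite type.

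\textbf{Main obstacle.} The crux is the openness of the ``identity quotient'' locus in $\Quot$, which genuinely uses the universal closedness of $f$ to ensure that the image of the support of the universal kernel is closed. The finite-type upgrade is then a clean noetherian argument via the canonical flattening filtration.
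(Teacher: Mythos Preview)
Your argument coincides with the paper's first proof and is correct when $f$ is \emph{proper}. The identification of $\Flat_{\sF/S}$ with the ``identity quotient'' locus in $\Quot(\sF/X/S)$, the openness via the image of $\mathrm{supp}(K)$ under a universally closed map, and the quasi-compactness via the canonical flattening stratification are exactly what the paper does.

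The gap is in your sentence ``the cited references produce $\Quot(\sF/X/S)$ as an algebraic space locally of finite type over $S$ under our assumptions ($f$ universally closed of finite type).'' The hypothesis in Theorem~\ref{T:functorial-flattening} is only that $f$ is universally closed of finite type, not proper; separatedness is not assumed. For non-separated $f$ the Quot functor is \emph{not} in general representable (already the Hilbert functor of the affine line with doubled origin fails), and the reference you invoke (Olsson) requires properness. The paper flags this explicitly and supplies a second argument for the non-separated case: one reduces to a scheme $S$, replaces $X$ by a smooth presentation so that $\Flat_{\sF/S}=\Flat_{p^*\sF/S}$, and then verifies the hypotheses of Murre's representability criterion. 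The only nontrivial check is that $\Flat_{\sF/S}(\Spec A)\to\varprojlim_n\Flat_{\sF/S}(\Spec A/\im^n)$ is bijective for complete local noetherian $(A,\im)$; this follows from the local criterion of flatness together with universal closedness (an open neighborhood of the special fiber must be everything). So your proof handles the proper case cleanly, but you need to add this Murre-based argument to cover the general statement.
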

\begin{proof}
We give two different proofs of representability.

The first proof requires that $f$ is separated, that is, \textbf{proper}.
We have a natural map $\Flat_{\sF/S}\to \Quot(\sF/X/S)$ taking $T$ to the
quotient $\sF\times_S T \xrightarrow{\cong} \sF\times_S T$ with trivial kernel.
This is an open subfunctor: if $T$ is any $S$-scheme and
$\varphi\colon \sF\times_S T\surj \sG$ is a surjection onto a finitely
presented sheaf $\sG$ that is flat over $T$ then the subfunctor
of $S$ where $\varphi$ becomes an isomorphism is open. The Quot-functor is
represented by an algebraic space that is separated and locally of finite
presentation over $S$, for any proper morphism
$X\to S$~\cite[Thm.~1.5]{olsson_proper-coverings}. This proves that $\Flat_{\sF/S}\to S$ is representable and locally of finite presentation.

For \textbf{non-separated} $f$, the $\Quot$ functor is not representable so a
different proof is required. We will use Murre's representability theorem.
The question is local on $S$ so we can assume that $S$ is a scheme. Let
$p\colon X'\to X$ be a smooth surjective morphism from a scheme. Then
$\Flat_{\sF/S}=\Flat_{p^*\sF/S}$ so \cite[Thm.~2]{murre} applies and states
that $\Flat_{\sF/S}$ is representable if and only if $\Flat_{\sF/S}(\Spec A)\to
\varprojlim_n \Flat_{\sF/S}(\Spec A/\im^n)$ is bijective for every
complete local
noetherian ring $A$ with maximal ideal $\im$. Suppose that the right hand side
is non-empty. Then, by the local criterion of flatness, $\sF\times_S \Spec A$
is $A$-flat at every point of the special fiber, hence in an open neighborhood
of the special fiber. Since $f$ is universally closed, every open neighborhood
of the special fiber is $X\times_S \Spec A$ so $\Flat_{\sF/S}(\Spec A)$ is
non-empty.

To see that $\Flat_{\sF/S}\to S$ is quasi-compact, we consider the
flattening stratification $T\to S$ of Remark~\ref{R:flattening-filtration}.
This gives rise to a surjection $T\to \Flat_{\sF/S}$ so $\Flat_{\sF/S}$ is
quasi-compact since $T$ is a finite union of locally closed subschemes.
\end{proof}

When $f$ is \emph{projective} with a specified ample line bundle, taking the
Hilbert polynomials of the fibers $\sF|_s$ is an upper-semicontinuous function
on $S$. If $T\to S$ is a morphism with $T$ reduced, then $\sF\times_S T$ is flat
over $T$ if and only if the Hilbert polynomial is locally constant.
Thus, we have a filtration of open substacks $S_{\leq P}\subseteq S$
such that $f$ is flat over the induced reduced strata
$(S_P)_\red=S_{\leq P}\smallsetminus S_{<P}$.
Moreover, it follows that the universal
flattening is a disjoint union of locally closed substacks $S_P$ indexed by
Hilbert polynomials $P$.
%
Note that the flattening filtration $S_{\leq P}$ need not be compatible with
the canonical flattening filtration, see Example~\ref{EX:algorithm}.

The following examples show that, in general, the universal flattening
$\Flat_{\sF/S}\to S$ is not a disjoint union of locally closed subschemes,
contrary to the expectation of~\cite[Rmk~(1) after
  Thm.~3.2]{olsson-starr_quot-functors}.

\begin{example}[Hironaka~{\cite[Ex.~2]{hironaka_flattening}}]\label{EX:Hironaka}
  Let $Y$ be a complex manifold of dimension $3$ and let $C\inj Y$ be a curve
  with a single node $P$. Locally around $P$, the curve $C$ is a union of two
  smooth irreducible curves $C_1$ and $C_2$ meeting at $P$.  Let $f\colon X\to
  Y$ be the modification where we, locally around $P$, first blow up $C_1$ and
  then blow up the strict transform of $C_2$. Outside $P$, this is simply the
  blow-up of $C$.
  
  Then $\Flat_{\sO_X/Y}=(Y\smallsetminus C) \amalg (\widetilde{C}\smallsetminus
  P_2)$ where $P_2$ is the point of the normalization $\widetilde{C}$
  corresponding to the local branch $C_2$. Indeed, locally around $P$, the
  restriction to the first branch $f|_{C_1}$ is flat, whereas the restriction
  to the second branch $f|_{C_2}$ is not flat. (The first restriction is
  a flat family of $\PP^1$s degenerating to two intersecting $\PP^1$s whereas
  the second restriction is a $\PP^1$-bundle with an extra irreducible
  component over $P$.)

  The canonical reduced flattening stratification is $(Y\smallsetminus C) \amalg
  (C\smallsetminus P) \amalg P$ but it is not the universal flattening.
\end{example}

\begin{example}[Kresch~{\cite{kresch_email-aug11-2010}}]
  Let $g\colon Y\to S$ be an \'etale covering of degree 2 between projective
  smooth threefolds. Choose a curve $D$ in $S$ with a single node such that the
  preimage $g^{-1}(D)$ is the union of two smooth curves $C_1$ and $C_2$
  meeting transversally at two points $P$ and $Q$ --- the preimages of the node.

  Let $p\colon X\to Y$ be Hironaka's construction of a non-projective
  $3$-fold~\cite[B.3.4.1]{hartshorne}). That is, over $P$ we first blow up
  $C_1$ and then the strict transform of $C_2$ and over $Q$ we first blow up
  $C_2$ and then the strict transform of $C_1$. This gives a smooth
  proper $3$-dimensional scheme $X$ which is not projective.
  As in the previous example, $p|_{C_1}$ is flat except at $Q$ and $p|_{C_2}$
  is flat except at $P$. We have that $\Flat_{\sO_X/Y}=\bigl(Y\smallsetminus
  (C_1\cup C_2)\bigr) \amalg (C_1\smallsetminus Q) \amalg (C_2\smallsetminus
  P)$.

  Now, consider the composition $f\colon X\to Y\to S$. Then
  $\Flat_{\sO_X/S}=(S\smallsetminus D) \amalg (\widetilde{D}\smallsetminus R)$
  where $R$ is the point of the normalization $\widetilde{D}$ that is the
  image of $Q\in C_1$ and $P\in C_2$. In particular, the universal flattening
  is not a disjoint union of locally closed subschemes.
\end{example}

\begin{example}
In~\cite[\S4]{kresch_flat-strat}, Kresch uses partial stabilization of
families of prestable curves to construct proper morphisms of schemes whose
universal flattening is not a stratification.
\end{example}

\begin{lemma}
The canonical flattening filtration of Remark~\ref{R:flattening-filtration}
commutes with flat base change $S'\to S$.
\end{lemma}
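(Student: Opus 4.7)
The plan is to prove by induction on $i$ that $U'_i = \pi^{-1}(U_i)$ as open subsets of $|S'|$. The base case $i=0$ is trivial; assume $U'_j = \pi^{-1}(U_j)$ for $j \le i$, so that $|Z'_i| = \pi^{-1}|Z_i|$. It then remains to compare the next-stage opens $V \subseteq Z_i$ and $V' \subseteq Z'_i$.

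I factor the base change through the intermediate $W_i := Z_i \times_S S'$, giving $Z'_i \hookrightarrow W_i \to Z_i$ in which $W_i \to Z_i$ is flat (base change of $\pi$) and $Z'_i = (W_i)_\red$. The easy inclusion $\pi^{-1}|V| \subseteq |V'|$ follows from base change of flatness, first along the flat morphism $W_i \to Z_i$ and then along the closed immersion $Z'_i \hookrightarrow W_i$.

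For the reverse inclusion I use the universal flattening $\Phi_i := \Flat_{\sF \times_S Z_i / Z_i} \to Z_i$ from Proposition~\ref{P:univ-flattening-representable}: a bijective monomorphism of finite type whose iso locus is precisely $V$, and which commutes with arbitrary base change (so $\Phi_i \times_{Z_i} W_i = \Flat_{\sF \times_S W_i / W_i}$). Faithful flatness of the local ring maps $\mathcal{O}_{Z_i,z} \to \mathcal{O}_{W_i,w}$ descends the iso condition on stalks: using the formula $\mathcal{O}_{\Phi_i \times_{Z_i} W_i,\, w_\Phi} = \mathcal{O}_{\Phi_i,\, z_\Phi} \otimes_{\mathcal{O}_{Z_i,z}} \mathcal{O}_{W_i,w}$, one concludes $|V_{W_i}| = \pi^{-1}|V|$, where $V_{W_i} \subseteq W_i$ is the corresponding iso locus.

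The hard part will be the last step: identifying $|V'|$ with $|V_{W_i}|$ inside $|Z'_i| = |W_i|$. Since $Z'_i \hookrightarrow W_i$ has nilpotent kernel, flatness over the reduction does not automatically imply flatness over the full thickening. The Local Criterion of Flatness reduces this to a vanishing $\operatorname{Tor}_1^{\mathcal{O}_{W_i,w}}(\sF \times_S W_i,\, \mathcal{O}_{Z'_i,z'}) = 0$, which, by flat base change of Tor along $\mathcal{O}_{Z_i,z} \to \mathcal{O}_{W_i,w}$, becomes an assertion over the reduced local ring $\mathcal{O}_{Z_i,z}$ that follows from the assumed flatness of $\sF'$ over $Z'_i$ at $z'$ together with the reducedness of the stratum.
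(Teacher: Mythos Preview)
Your inductive setup and the factorization through $W_i = Z_i \times_S S'$ are sound, and you correctly isolate the crux: passing from flatness over $Z'_i = (W_i)_\red$ to flatness over $W_i$. But the final Tor step does not close. After flat base change of Tor you are left with $\Tor_1^{A}(M, B_\red)$ where $A = \sO_{Z_i,z}$ is reduced, $A \to B = \sO_{W_i,w}$ is faithfully flat, and $M$ is the relevant stalk. You then assert this vanishes ``from the assumed flatness of $\sF'$ over $Z'_i$ together with reducedness of the stratum''. That implication is not justified: knowing that $M \otimes_A B_\red$ is $B_\red$-flat does not force $\Tor_1^A(M, B_\red) = 0$ unless you already know $A \to B_\red$ is faithfully flat, and reduction does \emph{not} in general commute with flat base change --- which is exactly the obstacle you are trying to get past. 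So this last step is, as written, circular.

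The paper avoids the difficulty by a recharacterization. Rather than describing $V$ as the locus where $\sF$ is flat over the reduction, it observes that for the bijective finite-type monomorphism $F := \Flat_{\sF/S} \to S$ the conditions proper, finite, closed immersion, and nil-immersion are all equivalent, and that $V$ is precisely the largest open over which $F$ is \emph{finite}. This description has two virtues your iso-locus description lacks: finiteness of a finite-type morphism is (a) purely topological, hence unchanged under the nil-immersion $Z'_i \hookrightarrow W_i$, and (b) fpqc-local on the base, hence descends along the faithfully flat $W_i \to Z_i$. Combining (a) and (b) gives $|V'| = |V_{W_i}| = \pi^{-1}|V|$ with no Tor computation. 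If you want to salvage your approach, the cleanest patch is to replace your ``iso locus'' of $\Phi_i$ by its ``finite locus'': over the reduced bases $Z_i$ and $Z'_i$ these coincide, while the finite locus is manifestly stable under both the nil-thickening and the flat descent steps.
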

\begin{proof}
For smooth base change, this follows directly since taking reduced subschemes
commutes with smooth base change. For general flat base change we argue as
follows.

Consider the universal flattening $F:=\Flat_{\sF/S}$. Since $F\to S$ is a
bijective monomorphism of finite type, the following properties are equivalent:
proper, finite, closed immersion, nil-immersion. Let $V$ be the largest open
subset of $S$ such that $F|_V\to V$ is finite. Then $V$ is also the largest
open subset such that $F|_{V_\red}\to V_\red$ is an isomorphism, or
equivalently, the largest open subset such that $\sF|_{V_\red}$ is flat over
$V_\red$. Since $F\to S$ is of finite presentation,
$s\in |V|$ if and only if $F\times_S \Spec(\sO_{S,s})\to \Spec(\sO_{S,s})$
is finite.

If $s'\in |S'|$ such that $F\times_S \Spec(\sO_{S',s'})\to \Spec(\sO_{S',s'})$
is finite, then by fpqc descent, $F\times_S \Spec(\sO_{S,s})\to
\Spec(\sO_{S,s})$ is also finite.
Thus the formation of $V$ commutes with flat base change.
It now follows by construction that the canonical
flattening filtration commutes with flat base change.
\end{proof}

\subsection{Flatifying modules}
In this section, we prove the main theorem when $X=S$. We thus assume that
$\sF$ is a coherent sheaf on $S$. We will later only use the case when $\sF$ is
an ideal sheaf.

For $s\colon \Spec k\to S$, let $\rk_\sF(s):=\dim_k (\sF\otimes_{\sO_S} k)$.
This only depends on $s\in |S|$. The rank function $\rk_\sF$
is upper semi-continuous and the $n$th Fitting ideal $F_n(\sF)$ cuts out the
locus where $\rk_{\sF} > n$, cf.\ \cite[\S 5.4.1]{raynaud-gruson}.

\begin{proposition}\label{P:flatify-module}
Let $S$ be a noetherian algebraic stack and $\sF$ a coherent $\sO_S$-module.
Let $\Delta\subseteq \NN$ be a subset and let $U\subseteq S$ be the largest open
substack such that $\sF|_U$ is locally free with ranks in $\Delta$. Then there
exists a single $U$-admissible blow-up $S'=\Bl_Z S\to S$ such that the strict
transform of $\sF$ is locally free with ranks in $\Delta$. Moreover,
$|Z|=S\smallsetminus U$ and $Z$ is functorial with respect to flat morphisms
$S'\to S$ between noetherian algebraic stacks.
\end{proposition}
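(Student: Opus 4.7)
The plan is to realize the center $Z$ canonically as the vanishing locus of an ideal sheaf built from Fitting ideals, so that both functoriality and the identification $|Z|=S\setminus U$ are immediate, and the only remaining content is a local computation on the blow-up. By Fitting's lemma, a coherent sheaf $\sG$ is locally free of rank $r$ at $s$ if and only if $F_{r-1}(\sG)_s=0$ and $F_r(\sG)_s=\sO_{S,s}$ (with the conventions $F_{-1}=0$ and $\Ann(0)=\sO_S$); hence the maximal open $U_r\subseteq S$ on which $\sF$ is locally free of rank $r$ is precisely the complement of $V\bigl(F_r(\sF)\cdot\Ann_{\sO_S}(F_{r-1}(\sF))\bigr)$. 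Since $S$ is quasi-compact and $\sF$ is coherent, $\rk_\sF$ is bounded by some $N$, so after intersecting with $\{0,1,\dots,N\}$ we may assume $\Delta$ is finite.

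I would then define
\[
\mathcal{I}\;:=\;\sum_{r\in\Delta} F_r(\sF)\cdot\Ann\bigl(F_{r-1}(\sF)\bigr),\qquad Z\;:=\;V(\mathcal{I}),\qquad S'\;:=\;\Bl_Z S,
\]
so that $|Z|=\bigcap_{r\in\Delta}(S\setminus U_r)=S\setminus U$, as required. Functoriality under a flat map $T\to S$ between noetherian stacks is automatic, since Fitting ideals commute with arbitrary base change and, for coherent modules, the annihilator commutes with flat base change; thus both $\mathcal{I}$ and the blow-up $\Bl_Z S$ depend functorially on $(S,\sF)$ along flat morphisms.

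The main step, and the main obstacle, is to verify that the strict transform $\widetilde{\sF}$ on $S'$ is locally free of rank in $\Delta$. At a point $s'\in S'$, invertibility of $\mathcal{I}\sO_{S'}$ forces a single summand $F_r(\sF)\cdot\Ann(F_{r-1}(\sF))$ (for some $r\in\Delta$) to generate it locally. Using a local presentation $\sO_S^m\to\sO_S^n\to\sF\to 0$ and the compatibility $F_k(\sF)\sO_{S'}=F_k(\sF\otimes\sO_{S'})$, principalization of $\Ann(F_{r-1}(\sF))$ lets one express the $F_{r-1}$-torsion of $\sF\otimes\sO_{S'}$ near $s'$ as being annihilated by an invertible ideal, so that after quotienting by the torsion one obtains $F_{r-1}(\widetilde{\sF})_{s'}=0$; combined with the principalization of $F_r(\sF)$, this forces $F_r(\widetilde{\sF})$ to be the unit ideal near $s'$, and Fitting's lemma then gives that $\widetilde{\sF}$ is locally free of rank $r\in\Delta$. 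This bookkeeping of Fitting ideals and torsion under the blow-up is the heart of the proof; it is a local version of Raynaud--Gruson's pre-flat analysis \cite[\S 5.4]{raynaud-gruson}. If a direct verification in one step proves delicate, I would split the construction into two successive $U$-admissible blow-ups, first of $\Ann(F_{r-1}(\sF))$ to kill the torsion and then of $F_r(\sF)$ to enforce the upper bound on rank, and then appeal to the universal property of blow-ups to recompose them into the single blow-up $\Bl_Z S$. The passage from noetherian schemes to noetherian algebraic stacks is then automatic by smooth descent from the flat functoriality already established.
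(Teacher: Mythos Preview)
Your center $\mathcal{I}=\sum_{r\in\Delta} F_r(\sF)\cdot\Ann\bigl(F_{r-1}(\sF)\bigr)$ is genuinely different from the paper's. The paper instead uses the ideals $J_\delta$ of the schematic closures $\overline{U_\delta}$ and sets
\[
I \;=\; F_0\Bigl(\textstyle\bigcap_{\delta\in\Delta} J_\delta\Bigr)\cdot\prod_{\delta\in\Delta}\bigl(F_\delta(\sF)+J_\delta\bigr),
\]
arguing in two explicit steps: the first factor makes $p^{-1}(U)$ schematically dense, and the factors $F_\delta+J_\delta$ force the closures $\overline{U'_\delta}$ to become pairwise disjoint on $S'$ (via \cite[Lem.~5.1.5]{raynaud-gruson}), reducing to the single-rank case where \cite[Lem.~5.4.3]{raynaud-gruson} applies. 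Your sum-of-products formula avoids schematic closures altogether and is arguably cleaner; it also achieves the separation of rank loci, but by a different mechanism.

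Your sketch, however, does not make that mechanism visible, and the phrasing via ``$F_{r-1}$-torsion'' is not correct as written. What actually happens is this. On a blow-up the exceptional ideal is everywhere generated by a non-zero-divisor, so at $s'\in S'$ the dominating summand $F_r(\sF)\cdot\Ann(F_{r-1}(\sF))\cdot\sO_{S'}$ is principal on a non-zero-divisor; hence so is each factor separately. Write $\Ann(F_{r-1}(\sF))\sO_{S'}=(b)$. From $b\cdot F_{r-1}(\sF)=0$ on $S$ one gets $b\cdot F_{r-1}(\sF')=0$ on $S'$, and since $b$ is a non-zero-divisor this forces $F_{r-1}(\sF')=0$ near $s'$ --- for the pullback $\sF'$ itself, before any torsion is removed. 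Then $\sF'$ is locally free of rank $r$ on the schematically dense open $D\bigl(F_r(\sF')\bigr)\supseteq p^{-1}(U)$ near $s'$, and \cite[Lem.~5.4.3]{raynaud-gruson} gives the conclusion. In particular, your claim that principalizing $F_r(\sF)$ ``forces $F_r(\widetilde\sF)$ to be the unit ideal'' does not follow directly from what you wrote, and the proposed fallback of splitting into two successive blow-ups does not address the real issue, which is the separation of the rank-$r$ loci; the missing observation is precisely that $b$ being a non-zero-divisor annihilates $F_{r-1}(\sF')$ outright.
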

\begin{proof}
Let $U_\delta\subseteq S$ be the largest open substack where $\sF|_U$ is
locally free of rank $\delta$. Then $U=\coprod_{\delta\in \Delta} U_\delta$ and
$U_\delta$ is empty for all but finitely many $\delta$. Let
$\overline{U_\delta}$ be the schematic closure of $U_\delta$. There are two
obstructions to the flatness of $\sF$: (i) the open substack $U$ need not be
schematically dense, and (ii) the rank could exceed $\delta$ on
$\overline{U_\delta}$. In particular, the different $\overline{U_\delta}$ could
intersect.

Let $J_\delta$ be the ideal defining $\overline{U_\delta}$. We take the center
$Z$ as the closed substack defined by the ideal
\[
I=F_0\Bigl({\textstyle \bigcap_{\delta\in \Delta} J_\delta}\Bigr) \cdot \prod_{\delta\in \Delta} \Bigl( F_\delta(\sF)+J_\delta \Bigr).
\]
Then $I|_U=\sO_U$ and $I$ commutes with flat base change since
schematic closures, finite intersections and Fitting ideals
commute with flat base change.

Let $p\colon S'=\Bl_Z S\to S$. The closed subset
$V(F_0(\bigcap_{\delta\in \Delta} J_\delta))$ equals the support of
$\bigcap_{\delta\in \Delta} J_\delta$ which is exactly the locus where $U$ is
not schematically dense. It follows that $U':=p^{-1}(U)$ is schematically dense
in $S'$. Let $\overline{U'_\delta}$ be the schematic closure of
$U'_\delta:=p^{-1}(U_\delta)$ in $S'$.

Let $\delta,\delta'\in \Delta$ and $\delta'>\delta$. On $\overline{U_\delta}\cup
\overline{U_{\delta'}}$, we have that $V(J_\delta)=\overline{U_\delta}$ and
$V(F_\delta(\sF))\supseteq \overline{U_{\delta'}}$. It follows that
$\overline{U'_\delta}$ and $\overline{U'_{\delta'}}$ are disjoint,
cf.\ \cite[Lem.~5.1.5]{raynaud-gruson}.

Let $\sF'=\sF\times_S S'$. On $\overline{U'_\delta}$, we have that
$F_\delta(\sF)\sO_{S'}=F_\delta(\sF')$ is invertible and that $\sF'$ is locally
free of rank $\delta$ on the schematically dense open subset $U'_\delta$. It
follows that the strict transform is locally free of rank
$\delta$~\cite[Lem.~5.4.3]{raynaud-gruson}.
\end{proof}

\begin{remark}
Example~\ref{E:non-noetherian-counter-example} shows that
Proposition~\ref{P:flatify-module} is false for $S$ non-noetherian even if
$\sF$ is of finite presentation. In the example, $\sF=\sO_S/(x)$,
$J_0=(y_1,y_2,\dots)$, $J_1=(x)$, $F_0(\sF)=(x)$, $F_1(\sF)=(1)$ and the
problem is that $F_0(\sF)+J_0$ is not of finite type.

Similarly, if instead $S=\Spec k[x,y_1,y_2,\dots]/(x^2,xy_1,xy_2,\dots)$ and
$\sF=\sO_S/(x)$. Then $J_0=(1)$, $J_1=(x)$, $F_0(\sF)=(x)$, $F_1(\sF)=(1)$ but
$F_0(J_0\cap J_1)=(x,y_1,y_2,\dots)$ is not of finite type.
\end{remark}

\subsection{Resolving monomorphisms}
Recall that a monomorphism $F\to S$, locally of finite type, is unramified,
that is, $\Omega_{F/S}=0$.  An
unramified morphism $h\colon F\to S$ is \'etale-locally on $F$ and $S$ a closed
immersion. In fact, there is even a canonical factorization
$h=e\circ i\colon F\inj E\to S$
where $i\colon F\inj E$ is a closed immersion and $e\colon E\to S$ is
\'etale~\cite{rydh_embeddings-of-unramified}.

The closed immersion $i$ is a regular embedding of codimension $\delta$ at
$x\in F$ if and only if $F\to S$ is a local complete intersection at $x$ and
$\coho{-1}(L_{F/S})$ is locally free of rank $\delta$ at $x$. In this situation, we
say that $F\to S$ is a local regular embedding of codimension $\delta$. If $F\to S$
is a local regular embedding at every point,
then $\coho{-1}(L_{F/S})$ is locally free and hence $F$ is a disjoint union
$\coprod_\delta F_\delta$ where $F_\delta\to S$ is a
local regular embedding of codimension $\delta$. A local regular embedding of
codimension $1$ is a ``local Cartier divisor''. The following are equivalent
for an unramified morphism $F\to S$ and a point $x\in F$:
\begin{enumerate}
\item $F\to S$ is a local regular embedding of codimension $0$ at $x$.
\item $F\to S$ is \'etale at $x$, and
\item $F\to S$ is flat (and locally of finite presentation) at $x$,
\end{enumerate}
If $h\colon F\to S$ is a monomorphism, then (i)--(iii) at $x$ are equivalent to
\begin{enumerate}\setcounter{enumi}{3}
\item $F\to S$ is an isomorphism in an open neighborhood of $h(x)$.
\end{enumerate}
If $F=\Flat_{\sF/S}$, then the largest open substack $U\subseteq S$ such that
$\sF$ is flat over $U$ coincides with the largest open substack $U\subseteq S$
such that $h|_U$ is an isomorphism.

We will now resolve the monomorphism $h\colon F:=\Flat_{\sF/S}\to S$ by which
we mean a modification $S'\to S$ such that $F\times_S S'\to S'$ is a local
regular embedding of codimension $\leq 1$.

\begin{proposition}\label{P:resolving-mono}
Let $S$ be a noetherian algebraic stack and let $h\colon F\inj S$ be a
monomorphism of finite type. Let $U\subseteq S$ be the largest open substack
such that $h|_U$ is a local regular embedding of codimension $\leq 1$.
Then there exists a $U$-admissible sequence of blow-ups $S'\to S$,
functorial with respect to flat morphisms, such that $F':=F\times_S
S'\to S'$ is a local regular embedding of codimension $\leq 1$.
\end{proposition}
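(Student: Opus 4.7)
The plan is to reduce the proposition to an application of Proposition~\ref{P:flatify-module} (with $\Delta=\{0,1\}$) applied to the ideal sheaf of $F$ in an étale neighborhood on which $h$ becomes a closed immersion.

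Since a finite-type monomorphism is unramified, the étale envelope theorem produces a canonical factorization $F\inj E\to S$ where $i\colon F\inj E$ is a closed immersion and $e\colon E\to S$ is étale; working on $E$, one regards $F$ as a closed subscheme of $E$ with coherent ideal sheaf $\mathcal{I}:=\mathcal{I}_F\subseteq\sO_E$. The hypothesis that $h|_U$ is a local regular embedding of codimension $\leq 1$ translates into the statement that $\mathcal{I}|_{e^{-1}(U)}$ is locally free of rank $\in\{0,1\}$: rank~$0$ encodes $h$ being a local isomorphism, while rank~$1$ encodes either a local Cartier divisor or a point missed by the image of $h$.

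I then apply Proposition~\ref{P:flatify-module} to $\mathcal{I}$ on $E$ with $\Delta=\{0,1\}$, obtaining a single $e^{-1}(U)$-admissible blow-up $E'=\Bl_{Z_E}E\to E$ whose strict transform of $\mathcal{I}$ is locally free of rank $\in\{0,1\}$. The kernel of the pullback map $p^*\mathcal{I}\to\sO_{E'}$ is supported on the exceptional divisor and agrees with the torsion of $p^*\mathcal{I}$ (sections of either are precisely those vanishing on the dense open where $p$ is an isomorphism), so the strict transform of $\mathcal{I}$ is canonically identified with the inverse-image ideal $\mathcal{I}\cdot\sO_{E'}$ as a submodule of $\sO_{E'}$. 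Consequently $\mathcal{I}\cdot\sO_{E'}$ is itself either zero or invertible, and the scheme-theoretic preimage $F\times_E E'\inj E'$ is a local regular embedding of codimension $\leq 1$.

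It remains to descend the blow-up $E'\to E$ to a blow-up $S'\to S$. This is the role of étale dévissage: using the functoriality of Proposition~\ref{P:flatify-module} with respect to flat morphisms, together with the fact that $|Z_E|=e^{-1}(S\smallsetminus U)$ descends set-theoretically, one shows that $Z_E$ arises as the pullback of a well-defined closed substack $Z\subseteq S$ with $|Z|=S\smallsetminus U$. The blow-up $S'=\Bl_Z S\to S$ is then the desired one, and its functoriality for flat $S''\to S$ is inherited from Proposition~\ref{P:flatify-module}. The main obstacle, as I see it, is precisely this descent: the two projections $E\times_S E\rightrightarrows E$ pull back $\mathcal{I}_F$ to the generically distinct ideals of ``$F$ in the first factor'' and ``$F$ in the second factor'', so naive étale descent of $Z_E$ along $E\to S$ fails, and one must use dévissage (or pass to a suitably refined étale cover on which the ``$F$-part'' pulls back unambiguously) to extract a bona fide center on $S$.
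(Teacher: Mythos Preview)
Your proposal has a genuine gap at the descent step. You correctly set up the \'etale envelope and correctly identify that descending the center $Z_E$ from $E$ to $S$ is the crux of the matter, but your claim that ``$Z_E$ arises as the pullback of a well-defined closed substack $Z\subseteq S$'' is false, and the closing appeal to ``d\'evissage'' is a placeholder, not an argument.

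Concretely: the \'etale envelope carries a canonical open section $j\colon S\to E$ whose image is exactly the complement of $i(F)$, and along $j(S)$ the ideal $\mathcal{I}$ is the unit ideal, hence already locally free of rank~$1$. Therefore $|Z_E|=i(F)\cap e^{-1}(S\smallsetminus U)$, not $e^{-1}(S\smallsetminus U)$ as you write. Over a point $s\in S\smallsetminus U$ lying in the image of $h$, the fiber $e^{-1}(s)$ contains both a point of $j(S)$ and a point of $i(F)$, and $Z_E$ meets only the latter. Hence $Z_E$ is \emph{not} the preimage of any closed substack of $S$, and no amount of flat functoriality of Proposition~\ref{P:flatify-module} can make it one; indeed you yourself observe that the two pullbacks to $E\times_S E$ disagree.

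The paper's resolution is to abandon the hope of descending a single center and instead manufacture a \emph{sequence} of blow-ups on $S$, guided by the canonical flattening filtration $\emptyset=U_0\subset U_1\subset\dots\subset U_n=S$ of the monomorphism $F\to S$ (Remark~\ref{R:flattening-filtration}). Over each reduced stratum $(U_i\smallsetminus U_{i-1})_\red$ the monomorphism $h$ is an isomorphism, so $e^{-1}$ of the stratum is two disjoint copies: the $i(F)$-copy and the $j(S)$-copy. Inductively on $i$, one applies Proposition~\ref{P:flatify-module} (now with $\Delta=\{1\}$) to the ideal $K$ of $Z_E$ restricted to $e^{-1}(U_i)$, producing a center $W$ supported in $i(F)\cap e^{-1}(U_i\smallsetminus U_{i-1})$. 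Because of the two-copies structure there, $e|_W\colon W\to U_i$ is a closed immersion, so one can push $W$ down and blow up the schematic closure $\overline{e(W)}$ in $S$. This inductive use of the flattening filtration is precisely the missing idea, and it explains why the conclusion of the proposition is a sequence of blow-ups rather than a single one (cf.\ Example~\ref{EX:algorithm}, where two blow-ups are needed).
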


Note that $U$ contains the largest open substack over which $h$ is an
isomorphism and that $F'=F'_0\amalg F'_1$ where $F'_0\to S'$ is an open
immersion and $F'_1\to S'$ is a local regular embedding of codimension $1$.

\begin{proof}[Proof of Proposition~\ref{P:resolving-mono}]
First assume that $h$ is a \textbf{closed immersion} and let
$I\subseteq \sO_S$ be the
ideal defining $h$. Then $U=U_0\amalg U_1$ where $I$ is locally free of rank $\delta$ over
$U_\delta$. By Proposition~\ref{P:flatify-module}
(with $\Delta=\{0,1\}$)
there exists a canonical $U$-admissible blow-up $S'=\Bl_Z S\to S$ that
flatifies $I$, that is, such that the strict transform of $I$ is locally
free. The strict transform of $I$ is nothing but the inverse image $I\sO_{S'}$
so $F'\to S'$ is a local regular embedding.

In \textbf{general}, we consider
the \emph{\'etale envelope} of $h$, that is, the canonical factorization of
$h$ as a closed immersion $i\colon F\inj E$ followed by an \'etale
non-separated map $e\colon E\to S$ which has a canonical section $j\colon S\to
E$ whose image is the complement of
$i(F)$~\cite{rydh_embeddings-of-unramified}.
By the special case, there is a $j(S)\cup e^{-1}(U)$-admissible blow-up
$\Bl_Z E\to E$ that makes the pull-back of $i$ regular.
This blow-up can be dominated by a
sequence of $U$-admissible blow-ups on $S$
by~\cite[Prop.~4.14(b)]{rydh_compactification-tame-stacks}. To avoid dependence
on \cite{rydh_compactification-tame-stacks} and to see that the sequence on $S$
is functorial, we will repeat the proof which also is simpler in our case.

Let $\emptyset=U_0\subset U_1 \subset U_2 \subset \dots \subset U_n=S$ be the
canonical flattening filtration of $F\to S$
(Remark~\ref{R:flattening-filtration}). In particular,
$h|_{(U_i\smallsetminus U_{i-1})_\red}$ is an isomorphism for $i=1,2,\dots,n$
so that $e^{-1}(U_i\smallsetminus U_{i-1})_\red$ is the disjoint union of
two copies of $(U_i\smallsetminus U_{i-1})_\red$.

Let $K\subseteq \sO_E$ be the ideal of the center $Z$. We will show that there
is a functorial sequence of $U$-admissible blow-ups $S'\to S$ such that
$K\sO_{E\times_S S'}$ becomes invertible. Note that $K=\sO_E$, hence invertible,
over $j(S) \cup e^{-1}(U)$.

By induction, we may assume that $K|_{e^{-1}(U_{i-1})}$ is invertible.
By Proposition~\ref{P:flatify-module}, applied to
$K|_{e^{-1}(U_i)}$ and $\Delta=\{1\}$,
there is a canonical $j(U_i)\cup e^{-1}(U_{i-1})$-admissible blow-up
$\Bl_W e^{-1}(U_i)\to e^{-1}(U_i)$ such that the inverse image of $K|_{e^{-1}(U_i)}$ becomes invertible. Since $|W|\subseteq |i(F)\cap e^{-1}(U_i)|
\smallsetminus e^{-1}(U_{i-1})$, it follows that $e|_W\colon W\to U_i$
is a closed immersion with image disjoint from $U_{i-1}$. We blow up
the scheme-theoretic closure $Q:=\overline{e(W)}$.

Since $e^{-1}(Q \cap
U_i)=W\amalg j(e(W))$, the blow-up of $Q$ will transform
$W$ into a Cartier divisor so the inverse image of $K$ becomes invertible
over $e^{-1}(U_i)$. We conclude by induction.
\end{proof}

\begin{example}\label{EX:algorithm}
Let $S$ be a smooth threefold with two smooth curves $C_1$ and $C_2$ meeting
transversally at $P$. Let $f\colon X\to S$ be the blow-up of $C_1$ followed by
the blow-up of the strict transform of $C_2$. Then the universal flattening is
$F:=\Flat_{\sO_X/S}=\bigl(S\setminus (C_1\cup C_2)\bigr) \amalg C_1 \amalg (C_2\setminus
P)$ whereas the canonical flattening filtration is $U_1=S\setminus (C_1\cup C_2)$,
$U_2=S\setminus P$, $U_3=S$. The algorithm of
Proposition~\ref{P:resolving-mono} first blows up $C_1\cup C_2$ and then blows
up a point above $P$.
The simple algorithm for projective morphisms, that blows up
$\overline{S_P}$ in decreasing order, blows up $C_1$ followed by the strict
transform of $C_2$. Hironaka's algorithm first blows up $P$ and then the
strict transform of $C_1\cup C_2$. The latter two algorithms have centers over
which $f$ is flat in contrast to the first algorithm:
$f$ is not flat over $C_1\cup C_2$.
\end{example}

\subsection{D\'evissage}\label{SS:devissage}
Let $(S,s)$ be a henselian noetherian local scheme and let $f\colon X\to S$ be
a morphism of schemes, locally of finite type, let $\sF$ be a coherent
$\sO_X$-module and let $x\in f^{-1}(s)$. Then by~\cite[Prop.~1.2.3]{raynaud-gruson} there exists an \'etale neighborhood $g\colon (U,u)\to (X,x)$,
such that $g^{-1}(\sF)$ admits a \emph{total d\'evissage at $u$}:
\[
D_i=\bigl( X_i\xrightarrow{p_i} T_i,
           \sL_i\xrightarrow{\alpha_i}\sN_i\to\sP_i,
           t_i
    \bigr)
\]
where $(T_0,t_0):=(U,u)$, $\sP_0:=g^*\sF$, and for $i=1,2,..,r$:
\begin{enumerate}
\item $X_i:=V(\Ann(\sP_{i-1}))\inj T_{i-1}$, and $t_{i-1}\in X_i$,
\item $T_i\to S$ is smooth
and affine with geometrically connected fibers,
\item $p_i\colon X_i\to T_i$ is finite, $t_i=p_i(t_{i-1})$
  and $p_i^{-1}(t_i)=\{t_{i-1}\}$ as sets,
\item $\sL_i$ is a free sheaf of finite rank on $T_i$,
\item $\sN_i:=(p_i)_*\sP_{i-1}$,
\item $\alpha_i\colon \sL_i\to \sN_i$ is a homomorphism such that
  $\alpha_i \otimes k(\tau)$ is bijective where $\tau$ is the unique
  generic point of $(T_i)_s$, and $\sP_i = \coker(\alpha_i)$.
\end{enumerate}
Finally, we also have $\sP_r=0$.

Then $\sF_x$ is $S$-flat if and only if $\alpha_i$ is injective for every
$i=1,2,\dots,r$~\cite[Cor.~2.3]{raynaud-gruson}, cf.\ \spcite{05I2}. Likewise,
if $(S_1,s_1)$ is a local scheme, $(S',s')\to (S,s)$ is a local morphism,
$X'=X\times_S S'$ and $x'$ is a point above $x$, then the pull-back of the
d\'evissage $(D_i)_{i=1,\dots,r}$ is a total d\'evissage $(D'_i)_{i=1,\dots,r}$
at $x'$ and
$\sF'_{x'}$ is $S'$-flat if and only if $\alpha'_i$ is injective for every
$i=1,2,\dots,r$.

The morphism $T_i\to S$ is \emph{pure}~\cite[Def.~3.3.3,
  Ex.~3.3.4]{raynaud-gruson}. Equivalently, $L_i:=\Gamma(T_i,\sL_i)$ is a
\emph{projective} $A$-module~\cite[Thm.~3.3.5]{raynaud-gruson}. Equivalently,
since $A$ is local, $L_i$ is a \emph{free} $A$-module~\cite{kaplansky_projective-modules}.

Let $P_i=\Gamma(T_i,P_i)$. Then we have exact sequences
\[
L_i \xrightarrow{\alpha_i} P_{i-1} \to P_i \to 0
\]
for $i=1,2,\dots,r$. Since $L_i$ are free, we can lift the $\alpha_i$ to
maps $\beta_i\colon L_i \to P_0=\Gamma(U,g^*\sF)$. If we let
$\beta\colon L_1\oplus \dots \oplus L_r \to P_0$ be the sum, then $\beta$ is
surjective. As mentioned above, $\sF_x$ is $S$-flat if and only if
all the $\alpha_i$ are injective, that is, if and only if $\beta$ is
an isomorphism. We can now summarize the situation as follows.

\begin{theorem}[Free presentation and flatness]\label{T:devissage}
Let $(S,s)=\Spec A$ be a henselian noetherian local scheme, let $X$ be an
algebraic stack, locally of finite type over $S$, let $\sF$ be a coherent
$\sO_X$-module and let $x\in |X|$ be a point over $s$. Then there exists a
smooth morphism $p\colon \Spec B\to X$, a free $A$-module $L$, usually of
infinite rank, and a surjection $\beta\colon L\to M=\Gamma(\Spec B,p^*\sF)$
such that
$\beta$ is an isomorphism if and only if $\sF$ is $S$-flat at $x$.

Moreover, if $(S',s')\to (S,s)$ is a local morphism, then the pull-back
$\beta'\colon L'\to M'$ of $\beta$ to $S'$ is an isomorphism if and only if the
pull-back $\sF'$ of $\sF$ to $X\times_S S'$ is $S'$-flat at some (or
equivalently every) point $x'$ over $x$.
In particular, $\beta|_s$ is an isomorphism.
\end{theorem}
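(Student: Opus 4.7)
The plan is to repackage the Raynaud--Gruson d\'evissage discussion preceding the statement into a formal assertion. First I would reduce from the stack $X$ to a scheme: pick a smooth morphism $q\colon V\to X$ from a scheme with some $v\in q^{-1}(x)$, and solve the problem for $q^{*}\sF$ at $v$ instead of $\sF$ at $x$, using that flatness of a coherent sheaf is smooth-local on the source. Then~\cite[Prop.~1.2.3]{raynaud-gruson} provides an \'etale neighbourhood $g\colon(\Spec B,u)\to(V,v)$ together with a total d\'evissage $(D_i)_{i=1}^{r}$ of $g^{*}q^{*}\sF$ at $u$; take $p:=q\circ g$, which remains smooth.

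Next I would extract $L$ and $\beta$ from global sections exactly as in Section~\ref{SS:devissage}. Each $T_i\to S$ is smooth affine with geometrically connected fibres, hence pure, so $L_i:=\Gamma(T_i,\sL_i)$ is projective over $A$ by~\cite[Thm.~3.3.5]{raynaud-gruson}, and free since $A$ is local. Set $L=\bigoplus_i L_i$, lift $\alpha_i$ through the canonical surjection $\phi_{i-1}\colon P_0\twoheadrightarrow P_{i-1}$ to $\beta_i\colon L_i\to P_0$ using freeness of $L_i$, and put $\beta:=\sum\beta_i\colon L\to M=P_0$. Surjectivity of $\beta$ follows from a short induction along the ascending filtration $F_i:=\ker\phi_i$ (with $F_0=0$ and $F_r=P_0$): the identity $\phi_{i-1}\beta_i=\alpha_i$ yields $F_{i-1}+\beta_i(L_i)=F_i$. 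For the flatness equivalence I would combine three implications: (i) if every $\alpha_i$ is injective, a descending induction using the $\phi_j$ (for each $j$ and $i<j$ one has $\beta_i(L_i)\subseteq F_i\subseteq F_{j-1}=\ker\phi_{j-1}$, so $\phi_{j-1}$ kills $\beta_i(L_i)$) shows $\beta$ is injective, hence an isomorphism; (ii) if $\beta$ is an isomorphism, then $M\cong L$ is a free $A$-module, so $p^{*}\sF$ is $S$-flat at $u$, whence $\sF$ is $S$-flat at $x$ by smoothness of $p$; (iii) if $\sF$ is $S$-flat at $x$, then every $\alpha_i$ is injective by~\cite[Cor.~2.3]{raynaud-gruson}.

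Finally, the base-change assertion is automatic from the fact recalled in the excerpt that the pull-back of a total d\'evissage along a local morphism $(S',s')\to(S,s)$ is again a total d\'evissage of the pull-back sheaf at any chosen lift $x'$ of $x$: the modules $L_i\otimes_A A'$ remain free, the $\beta_i$ pull back to lifts of the $\alpha'_i$, and the three-step equivalence applies verbatim over $A'$. The final clause that $\beta|_s$ is an isomorphism is the special case $S'=\Spec k(s)$, since any coherent sheaf on the fibre over $s$ is $k(s)$-flat. The only genuine obstacle in this plan is the bookkeeping of surjectivity and injectivity of $\beta$ via the filtration $F_\bullet$; the reduction from stacks to schemes and the base change are formal.
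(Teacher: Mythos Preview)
Your proposal is correct and follows essentially the same route as the paper: reduce to a scheme via a smooth presentation, invoke Raynaud--Gruson's d\'evissage to obtain the $L_i$ and $\alpha_i$, use purity plus Kaplansky to see that each $L_i$ is free, and build $\beta=\sum\beta_i$ by lifting. The paper's proof is a terse reference to the preceding discussion and does not spell out the filtration argument for surjectivity and injectivity of $\beta$; your bookkeeping via $F_i=\ker\phi_i$ is a clean way to make that step explicit.
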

\begin{proof}
Let $p\colon U\to X$ be a smooth presentation and pick a point $u$ above
$x$. After replacing $U$ with an \'etale neighborhood, there is a total
d\'evissage. This gives $\beta\colon L\to M$ where $L=L_1\oplus\dots \oplus L_r$
and $M=\Gamma(U,p^*\sF)$ as described above.
\end{proof}

\begin{remark}
The theorem also implies that $\sF$ is flat at $x$ if and only if $\sF$ is
flat over the image of $p$.
In the setting of complex spaces, Hironaka replaces the free $L_i$ with
\emph{pseudo-free} modules. The d\'evissage is replaced with a pseudo-free
presentation~\cite[Thm.~2.1]{hironaka_flattening}.
If $X=S=\Spec A$ is local, then Theorem~\ref{T:devissage} is elementary: we can
choose a basis $\kappa(s)^n\cong \sF|_s$ and any lift $\beta\colon A^n\surj
\Gamma(S,\sF)$ is surjective by Nakayama's lemma and gives a presentation with
the requested properties.
\end{remark}

\subsection{Flattening in the local case}\label{SS:local}
Let $(S,s)$ be a henselian noetherian local scheme, let $X$ be
an algebraic stack, locally of finite type over $S$, let $\sF$ be a coherent
$\sO_X$-module and let $x$ be a point above $s$.
Let $\Flat_{\sF/S,x}$ be the functor
from local schemes $(S',s')$ above $(S,s)$ to sets that ``makes $\sF$ flat
at $x$''. That is, if $x'\in X'=X\times_S S'$ is a point above $x$, then
$\Flat_{\sF/S,x}(S',s')$ is the one-point set if $\sF$ is $S'$-flat at $x'$
and the empty set otherwise.

\begin{proposition}\label{P:local-description-of-torsion}
The functor $\Flat_{\sF/S,x}$ is represented by a closed subscheme
$\Spec(A/I)$ of $S=\Spec A$. Let $\widetilde{\sF}$ be the quotient of $\sF$
by the $I$-torsion. If $I$ is invertible, then
$\sF\otimes \kappa(s)\surj \widetilde{\sF}\otimes \kappa(s)$ is not an
isomorphism.
\end{proposition}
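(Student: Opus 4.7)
The plan is to extract both parts directly from Theorem~\ref{T:devissage}. That theorem supplies a surjection $\beta\colon L\to M$ of $A$-modules with $L$ a free $A$-module, such that for every local morphism $(S',s')\to (S,s)$, the base change $\beta\otimes_A A'$ is an isomorphism if and only if $\sF$ is $A'$-flat at $x'$. I would fix an $A$-basis $\{e_\alpha\}$ of $L$ and write each $k\in K:=\ker(\beta)$ uniquely as $k=\sum_\alpha a_{k,\alpha}e_\alpha$.

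For representability, the flatness of $L$ gives via the long exact $\mathrm{Tor}$ sequence that $\ker(\beta\otimes A')$ equals the image of $K\otimes_A A'\to L\otimes_A A'$. Since $\beta\otimes A'$ is automatically surjective, it is an isomorphism precisely when this image vanishes, i.e.\ when every coefficient $a_{k,\alpha}$ maps to zero in $A'$. I would then define $I\subseteq A$ as the ideal generated by all the $a_{k,\alpha}$ (finitely generated by noetherianity), so $\Flat_{\sF/S,x}(A')=\{*\}$ iff $A\to A'$ factors through $A/I$; this represents the functor by $\Spec(A/I)$. Since $\beta|_s$ is already an isomorphism (Theorem~\ref{T:devissage}), $K\subseteq \mathfrak{m}_AL$, so $I\subseteq \mathfrak{m}_A$.

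For the invertibility statement, suppose $I=(t)$ for a non-zero-divisor $t\in\mathfrak{m}_A$. I would first observe that not every $a_{k,\alpha}$ can lie in $t\mathfrak{m}_A$: otherwise $(t)=I\subseteq t\mathfrak{m}_A$ would yield $t(1-m)=0$ for some $m\in\mathfrak{m}_A$, contradicting that $1-m$ is a unit and $t$ a non-zero-divisor. Pick $k_0\in K$, $\alpha_0$ with $a_{k_0,\alpha_0}=tu$, $u\in A^\times$. Since $L$ is $t$-torsion-free, write $k_0=tk_0^*$ uniquely; the $\alpha_0$-coordinate of $k_0^*$ is the unit $u$, so $k_0^*\notin \mathfrak{m}_AL$. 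Set $m_0:=\beta(k_0^*)$. Then $tm_0=\beta(k_0)=0$, so $m_0$ lies in the $t$-torsion $M[t]$, while $\beta|_s$ carries the nonzero class of $k_0^*$ in $L/\mathfrak{m}_AL$ to the nonzero class of $m_0$ in $M/\mathfrak{m}_AM$, giving $m_0\notin \mathfrak{m}_AM$.

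Finally I would descend from $M$ back to $\sF$. Since $p\colon \Spec B\to X$ is smooth (hence flat), $M[t]=\Gamma(\Spec B,(p^*\sF)[t])\subseteq \Gamma(\Spec B,p^*T_\sF)$, where $T_\sF$ is the $I$-torsion of $\sF$. Thus $m_0$ is a section of $p^*T_\sF$ whose image in $\Gamma(\Spec B,p^*\sF/\mathfrak{m}_Ap^*\sF)$ is nonzero. At a point $b\in \Spec B$ where this image has nonzero stalk, faithful flatness of the local smooth homomorphism $\sO_{X,p(b)}\to \sO_{\Spec B,b}$ gives $(T_\sF)_{p(b)}\not\subseteq \mathfrak{m}_A\sF_{p(b)}$, so the image of $T_\sF$ in $\sF\otimes \kappa(s)$ is nonzero and the surjection $\sF\otimes \kappa(s)\surj \widetilde{\sF}\otimes \kappa(s)$ has nonzero kernel. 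The only delicate step is this last smooth descent, but faithful flatness of local smooth maps handles it cleanly.
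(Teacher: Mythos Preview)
Your proof is correct and follows essentially the same route as the paper: both extract the ideal $I$ from the coefficients of $\ker\beta$ in a chosen basis of the free module $L$ furnished by Theorem~\ref{T:devissage}, and both handle the invertible case by dividing an element of $\ker\beta$ by the generator $t$ to produce an element whose image in $M/\mathfrak{m}M$ is nonzero but which is $t$-torsion in $M$. The only difference is that you spell out the descent from $M=\Gamma(\Spec B,p^*\sF)$ back to $\sF$ via faithful flatness of the smooth chart, a step the paper leaves implicit in the final sentence.
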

\begin{proof}
Pick $\beta\colon L\to M$ as in Theorem~\ref{T:devissage}.
Then $\Flat_{\sF/s,x}$ is equivalent to the functor that ``makes $\beta$
an isomorphism''. Let $(e_\alpha)_\alpha$ be a basis of $L$.
For $g\in L$, let $g=\sum_\alpha g_\alpha e_\alpha$. Let $I\subseteq A$ be
the ideal generated by the $g_\alpha$ for every $g\in \ker \beta$. Then
$\Flat_{\sF/s,x}=V(I)$. Indeed, $I$ is the smallest ideal such that
$\ker \beta \subseteq IL$.

If $I=(a)$ and $a$ is not a zero-divisor, then for every $g\in \ker \beta$,
there is a unique $g' \in L$ such that $g=ag'$. Since
$(g'_\alpha)_{g,\alpha}=(1)$ and $A$ is local, there exists some $g$ and
$\alpha$ such that $g'_\alpha$ is a unit. For such a $g$, we have that
$g'\notin \im L$ where $\im$ denotes the maximal ideal of $A$. In particular,
$\overline{g'}\in L/\im L=M/\im M$ is non-zero whereas $ag'$ is zero in
$M$.
It follows that $\sF\otimes \kappa(s)\surj \widetilde{\sF}\otimes \kappa(s)$
has non-trivial kernel.
\end{proof}

\begin{remark}
The first part of Proposition~\ref{P:local-description-of-torsion}
is~\cite[Thm.~4.1.2]{raynaud-gruson} whereas the second part does not
seem to appear there.
In the setting of complex spaces, the analogue of
both parts of Proposition~\ref{P:local-description-of-torsion}
is~\cite[Thm.~2.4]{hironaka_flattening}.
\end{remark}

\subsection{Proof of the main theorem}

\begin{lemma}\label{L:improvement-after-blowup}
Let $S$, $f\colon X\to S$ and $\sF$ be as in
Theorem~\ref{T:functorial-flattening}. Suppose that
$\Flat_{\sF/S}=F_0\amalg F_1$ where $F_0\to S$ is an open immersion and $F_1\to
S$ is a local regular embedding of codimension $1$. Let $\widetilde{\sF}$ be
the quotient of $\sF$ by its torsion, relative to some $U\subseteq F_0$.
Then $\sF\otimes \kappa(s)\surj
\widetilde{\sF}\otimes \kappa(s)$ is not an isomorphism for every $s\in |F_1|$.
\end{lemma}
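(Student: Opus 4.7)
The plan is to reduce to the strictly henselian local case and exhibit a single point $x$ in the fiber $X_s$ at which the local flattening ideal agrees with the ideal cutting out $F_1$, so that the second part of Proposition~\ref{P:local-description-of-torsion} applies directly.

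First, fix a geometric point $s \in |F_1|$. Since the assertion is smooth-local on $S$, I may assume $S = \Spec A$ is strictly henselian local with closed point $s$; as $F_1 \hookrightarrow S$ is a local regular embedding of codimension $1$ at $s$, locally $F_1 = V(a)$ for a non-zero-divisor $a \in A$, and $F_0$ corresponds to the open complement $D(a) \subseteq S$. For each geometric point $x \in X_s$, Proposition~\ref{P:local-description-of-torsion} represents the local flattening functor $\Flat_{\sF/S,x}$ by a closed subscheme $V(I_x) \subseteq S$.

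The key step is the identification $(a) = \sum_{x \in X_s} I_x$. I would prove that for a local morphism $(T,t)\to(S,s)$, the pullback $\sF \times_S T$ is $T$-flat if and only if $\sF$ is flat at every point of $X_T$ above every $x \in X_s$: the forward direction is clear, and the converse uses universal closedness of $f_T$ to promote flatness at every point of the closed fiber $X_t$ to flatness on all of $X_T$, since the non-flat locus is closed in $X_T$ with closed image in $T$ that avoids $t$ and so must be empty. This identifies $\Flat_{\sF/S}$ with $\bigcap_x \Flat_{\sF/S,x}$ as subfunctors on local schemes over $(S,s)$; noetherianity then reduces the intersection to a finite one, giving an equality $I_{x_1} + \dots + I_{x_n} = (a)$. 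Since $a$ is a non-zero-divisor and each $I_{x_i} \subseteq (a)$, we may write $I_{x_i} = a J_{x_i}$ with $\sum J_{x_i} = A$, and locality of $A$ forces $J_{x_i} = A$ for some $i$; hence $I_x = (a)$ is invertible for $x := x_i$.

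With this choice of $x$, Proposition~\ref{P:local-description-of-torsion} yields that $\sF\otimes\kappa(s) \twoheadrightarrow (\sF/T_a)\otimes\kappa(s)$ has nontrivial kernel in the stalk at $x$, where $T_a$ denotes the $(a)$-torsion submodule of $\sF$. Since $U \subseteq F_0 = D(a)$, the $U$-torsion $T_U = \ker(\sF \to j_*j^*\sF)$ contains $T_a$ (restricting to the smaller open $f^{-1}(U) \subseteq f^{-1}(D(a))$ is a weaker vanishing condition), so $\widetilde{\sF} = \sF/T_U$ is a further quotient of $\sF/T_a$. Consequently the kernel of $\sF\otimes\kappa(s)\twoheadrightarrow\widetilde{\sF}\otimes\kappa(s)$ at $x$ contains the image of $T_a\otimes\kappa(s)$, which is nonzero, giving the desired non-isomorphism. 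The main obstacle is the identification $(a) = \sum_x I_x$: this converts a global flatness criterion into a pointwise one via universal closedness. Once established, the conclusion follows by elementary manipulations in $A$ together with the refined torsion statement of Proposition~\ref{P:local-description-of-torsion}.
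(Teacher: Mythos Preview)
Your argument is correct and follows essentially the same route as the paper's proof: reduce to the (strictly) henselian local case, identify the local component of $F_1$ through $s$ as $V(a)$ for a non-zero-divisor $a$, show $(a)=\sum_x I_x$ via universal closedness, deduce that some $I_x=(a)$ by locality of $A$, and then invoke Proposition~\ref{P:local-description-of-torsion} together with $U\subseteq D(a)$. Two minor imprecisions: after henselization one only has $F_1=V(a)\amalg F_1'$ with $F_1'$ supported in $D(a)$ (not $F_1=V(a)$), and correspondingly one only gets $F_0\subseteq D(a)$ rather than equality; neither affects your argument since for local $(T,t)\to(S,s)$ only the component $V(a)$ is relevant, and you only use $U\subseteq F_0\subseteq D(a)$.
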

\begin{proof}
The statement
is smooth-local on $S$ so we can assume that $S=\Spec A$ is an affine scheme
and we may furthermore replace $S$ with the henselization
$\Spec \sO^{\h}_{S,s}$ at some $s\in |F_1|$ and assume that
$A$ is local henselian. Then there is a non-zero divisor $a\in A$ such
that $F_1=V(a)\amalg F_1'$ where $F_1'\to S$ factors
through $D(a)$~\cite[Thm.~18.5.11c]{egaIV}.

For every $x\in f^{-1}(s)$ we have $\Flat_{\sF/S,x}=V(I_x)$
(Proposition~\ref{P:local-description-of-torsion}) which makes $\sF$ flat in a
neighborhood of $x$. Since $f$ is universally closed and $S$ is local, we have
that $\sF$ is flat if and only if $\sF$ is flat at every point $x$ above $s$.
Thus, $V(a)=\bigcap_x V(I_x)$. Since $a$ is a non-zero divisor and $A$ is local,
there exists an $x$ such that $I_x=(a)$. The result now follows from
Proposition~\ref{P:local-description-of-torsion} since killing torsion with
respect to $U$ also kills $a$-torsion since $U\subseteq D(a)$.
\end{proof}

\begin{remark}
The results of Sections~\ref{SS:devissage} and~\ref{SS:local} are only used for
the proof of Lemma~\ref{L:improvement-after-blowup} and there is perhaps a more
elementary proof. It is not difficult to see that $\sF\otimes A/(a)\to
\widetilde{\sF}\otimes A/(a)$ is not an isomorphism and this only requires that
$\Spec A/(a)\inj F$.
\end{remark}

\begin{proof}[Proof of Theorem~\ref{T:functorial-flattening}]
We will apply the following algorithm.
\begin{enumerate}
\item Let $U\subseteq S$ be the maximal open substack such that $\sF$ is flat
  over $U$. Let $S_0=S$ and let $n=0$.
\item Let $\sF_n=\widetilde{\sF\times_S S_n}$ be the strict transform
  and let $F_n=\Flat_{\sF_n/S_n}$.
\item Apply Proposition~\ref{P:resolving-mono} to produce a blow-up sequence
  $S_{n+1}\to S_n$ such that $F_n\times_{S_n} S_{n+1}\to S_{n+1}$ is a local
  regular embedding of codimension $\leq 1$.
\item If $S_{n+1}\neq S_n$, then increase $n$ by $1$ and repeat from (ii).
\end{enumerate}
Let $Z_n\subseteq |S_n|$ be the non-flat locus of $\sF_n$. Then the $Z_n$ are
closed and $Z_{n+1}\subseteq Z_n\times_{S_n} S_{n+1}$. Moreover, for every
point $s_{n+1}\in Z_{n+1}$, the surjection $\sF_n\times_{S_n}
\kappa(s_{n+1})\surj \sF_{n+1}\times_{S_{n+1}}
\kappa(s_{n+1})$ is not an isomorphism by Lemma~\ref{L:improvement-after-blowup}.

Suppose that the algorithm does not terminate. Consider the topological space
$\widetilde{S}=\varprojlim_n |S_n|$. This is a quasi-compact space since the
space $\varprojlim_n |S_n|_\cons$ has the same underlying set but a finer
topology which is compact and Hausdorff. Here $|S_n|_\cons$ denotes the
constructible topology, where open sets are arbitrary unions of constructible
sets, which is compact and Hausdorff~\cite[Prop.~1.9.15]{egaIV}.

Since $\widetilde{S}$ is quasi-compact and $Z_n\times_{S_n} \widetilde{S}$ is a
filtered system of closed subsets, its intersection is non-empty.  Thus, there
exists an infinite sequence of closed points $s_n\in |Z_n|$, $n=0,1,2,\dots$
such that
$s_{n+1}$ lies above $s_n$. Let $k$ be an algebraic closure of $\kappa(s_0)$
and choose embeddings $\kappa(s_n)\inj k$. Then we have a sequence of
non-trivial surjections
\[
\sF_0\times_{S_0} k \surj \sF_1\times_{S_1} k \surj\dots
\]
But this is a sequence of coherent sheaves on the noetherian stack
$X\times_S k$ so cannot be infinite. This finishes the proof for general $S$.

If $S$ is smooth, every ideal on $S$ can be principalized by a sequence of
blow-ups in smooth centers that is functorial with respect to smooth morphisms,
see~\cite[Thm.~3.26]{kollar_res-of-sings}
or~\cite[Thm.~1.3]{bierstone-milman_functoriality-in-res-of-sing}. If the
flatification algorithm blows-up $I_1$, $I_2$, \dots, $I_n$, then we apply
principalization to $I_1$, then to the inverse image of $I_2$ etc. This gives the
result.
\end{proof}

\begin{remark}\label{R:pure}
We have only used that $f$ is universally closed to prove that
$\Flat_{\sF/S}$ is representable (Proposition~\ref{P:univ-flattening-representable}) and to compare it with $\Flat_{\sF/S,x}$ (proof of
Lemma~\ref{L:improvement-after-blowup}). In both situations it is enough
that $f$ is of finite type and $\sF$ is
\emph{pure}~\cite[Def.~3.3.3]{raynaud-gruson}. Nevertheless, the
algorithm does not work in this setting. The problem is that the strict
transform of a pure sheaf need not be pure as the following example shows.
\end{remark}

\begin{example}
Let $S=\AA^2=\Spec k[s,t]$ and $\overline{X}=\PP^2_S=\Proj
k[s,t,x,y,z]$. Let $X=\overline{X}\smallsetminus \{s=t=x=0\}$. Let $Z=V(s)\cup
V(xy-sz^2)\inj X$. Then it is readily verified that $Z\to S$ is pure but
killing $s$-torsion gives $\widetilde{Z}=V(xy-sz^2)$ which is not pure because
above $s=0$, we have the irreducible component $x=0$ whose fiber over the origin
is empty.
%
\end{example}

\end{section}


\begin{section}{Functorial \'etalification}\label{S:functorial-etalification}
In this section we prove Theorem~\tref{T:ETALIFICATION}. As the other
main theorems, the proof does not require that $f$ is separated, only that $f$
is universally closed, of finite type, and has quasi-finite diagonal. In
particular, $f$ is relatively Deligne--Mumford.
Since the algorithm we
construct will be functorial with respect to smooth morphisms, we can assume
that $S$ is a scheme.

We begin with blowing up $X\smallsetminus f^{-1}(U)$ with the reduced
structure.  After this blow-up, $f^{-1}(U)$ is schematically dense and will
remain so throughout the algorithm. If $U$ is not dense, we blow up the
components of $S$ that $U$ does not intersect. After this $U$ is dense in $S$.

Recall that if $S'=\Bl_W S\to S$ is a blow-up, then the strict transform $X'$
is simply the blow-up of $X$ in $f^{-1}(W)$. Since $f^{-1}(U)$ is schematically
dense, the strict transform of a $U$-admissible blow-up
is also the closure of $f^{-1}(U)$ in $X\times_S S'$.

We now apply Theorem~\tref{T:FLATIFICATION-SMOOTH} to flatify $X\to S$
by a sequence of smooth $U$-admissible blow-ups. We can thus assume that $X\to
S$ is flat. In particular $X\to S$ is now quasi-finite.

Let $\widetilde{X}$ be the normalization of $X$. Note that $\widetilde{X}\to X$
is an isomorphism over $U$. Let $V\subseteq S$ be the locus where
$\widetilde{X}\to S$ is \'etale.

Let $Z=S\smallsetminus V$ with the reduced scheme structure. Resolve $Z$, that
is, perform a sequence of $V$-admissible blow-ups $S'\to S$ with smooth centers
such that $(Z\times_S S')_\red$ becomes a divisor $D_1 \cup D_2
\cup \dots \cup D_n$ where the $D_i$ are smooth and meet with simple normal
crossings. Replace $S$, $X$ and $\widetilde{X}$ with $S'$, $X\times_S S'$ and
the normalization of $X\times_S S'$.

Let $s\in |S|$ be a point of codimension $1$. Then we can define the
ramification index at $s$ as follows. Any $x\in |\widetilde{X}|$ above $s$ is
normal of codimension $1$, so the strict local ring
$\sO^{\sh}_{\widetilde{X},x}$ is a discrete valuation ring.  Therefore
$\im_s\sO^{\sh}_{\widetilde{X},x}=(\im_x)^{e(x)}$ for some integer $e(x)\geq
1$. We let $e(s)$ be the least common multiple of the $e(x)$ for all $x$ above
$s$. Note that $e(s)=1$ if $s\in V$.

Now start with $i=1$. Then start with $r=2$. Take the $r$th root stack of all
connected components of $D_i$ with ramification index $r$.
Increase $r$ by $1$ and repeat until all components of $D_i$ with $e(s)>1$
has been rooted. Then increase $i$ by
$1$ and repeat until $i=n$. When we take a root stack $S':=S\bigl(\sqrt[r]{D}\bigr)\to S$
we also take the root stack $X\times_S S' = X\bigl(\sqrt[r]{f^{-1}(D)}\bigr)) \to X$.

If $S'\to S$ is the composition of all these root stacks, replace $S$ with $S'$
and $X$ with $X\times_S S'$ and $\widetilde{X}$ with the normalization of
$X\times_S S'$. A local analysis, using that in characteristic zero, an
extension of strictly henselian discrete valuation rings $A\to B$ with
ramification index $r$ is an $r$th Kummer extension, that is, $B=A[x]/(x^r-t)$
where $t\in A$ is a uniformizer, shows that $\widetilde{X}$ is now \'etale in
codimension $1$.

Let $Y\to \widetilde{X}$ be an \'etale presentation. By the Zariski--Nagata
purity theorem~\cite[Exp.~X, Thm.~3.1]{sga1}, \cite[Exp.~X, Thm.~3.4]{sga2},
$Y\to S$ is \'etale, hence so is $\widetilde{X}\to S$.

To finish the proof, we need to replace the normalization $n\colon
\widetilde{X}\to X$ with a sequence of blow-ups. Firstly, use
Theorem~\tref{T:FLATIFICATION} to find a functorial sequence $X'\to X$ of
$f^{-1}(U)$-admissible blow-ups that flatify $n$, that is, makes $n$ into an
isomorphism. We obtain the commutative diagram
\[
\xymatrix{%
\widetilde{X}'\ar[d]_{\cong}\ar[r] & \widetilde{X}\ar[d]^n \\
X'\ar[r] & X
}%
\]
where the bottom row is a sequence of blow-ups in centers $W_i$, $i=1,2,\dots,r$
and the top row is a sequence of blow-ups in centers $n^{-1}(W_i)$.

Secondly, use Theorem~\tref{T:FLATIFICATION-SMOOTH} to find a functorial
sequence $S'\to S$ of $U$-admissible blow-ups in smooth centers that flatify
$X'\to X\to S$. Let $X''\to X'$ be the strict transform, which also is a sequence
of blow-ups. This gives us the commutative diagram
\[
\xymatrix@R=5mm{%
X''\ar[d]\ar[r] & X'\ar[d] \\
\widetilde{X}\times_S S'\ar[d] & X\ar[d] \\
S'\ar[r] & S.
}%
\]
Since $X''\to S'$ is flat and $\widetilde{X}\times_S S'\to S'$ is \'etale,
it follows that the modification $X''\to \widetilde{X}\times_S S'$ is flat,
hence an isomorphism. We have thus obtained a sequence of blow-ups
$X''\to X'\to X$ and a sequence of blow-ups with smooth centers $S'\to S$
such that $X''\to S'$ is \'etale.
\end{section}


\begin{section}{Applications of functorial flatification}
\subsection{Cofinality of blow-ups among modifications}
Our first applications are immediate consequences of the three main theorems
since flat modifications and \'etale stack-theoretic modifications are
isomorphisms.

\begin{theorem}[Blow-ups are functorially cofinal]\label{T:blowups-cofinal}
Let $S$ be a noetherian algebraic stack and let $f\colon X\to S$ be a
modification. That is, $f$ is proper, representable, and $f|_U$ is an
isomorphism for some open substack $U\subseteq S$.
Then there exists a sequence of $U$-admissible blow-ups
$X'\to X$ such that the composition
$X'\to S$ is a sequence of $U$-admissible blow-ups.
Moreover,
\begin{enumerate}
\item The sequences are functorial with respect to flat
  base change $S'\to S$.
\item If $S$ is smooth over a field of characteristic zero, there are
  sequences of blow-ups where $X'\to S$ has smooth centers,
  and they are functorial with respect to smooth base change $S'\to S$.
\end{enumerate}
\end{theorem}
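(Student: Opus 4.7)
The plan is to reduce the statement to Theorem~\tref{T:FLATIFICATION} by exploiting the fact that a flat proper modification is an isomorphism. The key observation is that once $f^{-1}(U)$ is scheme-theoretically dense in $X$, the strict transform of a $U$-admissible blow-up on $S$ coincides with a $U$-admissible blow-up on $X$, so a single run of the flatification algorithm simultaneously produces both required sequences.

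First, I reduce to the case where $f^{-1}(U)$ is scheme-theoretically dense in $X$ by performing the initial blow-up $\Bl_{X \smallsetminus f^{-1}(U)} X \to X$ with the complement given its reduced structure, exactly as in the opening step of Section~\ref{S:functorial-etalification}. This is a $U$-admissible blow-up of $X$, does not change $S$, and preserves the hypothesis that $f$ is a modification. I then apply Theorem~\tref{T:FLATIFICATION} to $f\colon X \to S$ to obtain a sequence of $U$-admissible blow-ups $p\colon \widetilde{S}\to S$ whose strict transform $\widetilde{f}\colon \widetilde{X}\to \widetilde{S}$ is flat. Since $f$ is a proper representable modification and $p$ restricts to an isomorphism over $U$, the map $\widetilde{f}$ is a proper, representable, flat modification; such a morphism is an isomorphism. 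Hence $\widetilde{X}\cong \widetilde{S}$, and the composition $\widetilde{X}\to S$ is tautologically the blow-up sequence $p$.

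It remains to interpret $\widetilde{X}\to X$ as a $U$-admissible blow-up sequence. Here is where schematic density is used: for each blow-up $\Bl_Z S_i\to S_i$ in $p$, the strict transform along $f$ equals the blow-up of the current $X_i$ in the scheme-theoretic preimage $f_i^{-1}(Z)$, which is disjoint from $f^{-1}(U)$ since $Z$ is disjoint from $U$. Iterating gives the sequence $\widetilde{X}\to X$, proving the existence part. Functoriality with respect to flat base change is inherited step by step: the initial blow-up of $X \smallsetminus f^{-1}(U)$ is functorial under flat pullback, and Theorem~\tref{T:FLATIFICATION} is functorial under flat base change by construction. For part~(ii), using Theorem~\tref{T:FLATIFICATION-SMOOTH} in place of Theorem~\tref{T:FLATIFICATION} yields smooth centers on $S$ and functoriality under smooth base change. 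The only technical point to watch is the initial schematic-density reduction: without it, the strict transform of $X$ under a blow-up of $S$ need not agree with the blow-up of $X$ in the preimage of the center, and the identification of $\widetilde{X}\to X$ as a blow-up sequence would fail.
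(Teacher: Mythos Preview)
Your approach is exactly the one the paper intends: apply functorial flatification to $f$, use that a flat proper representable modification is an isomorphism, and then read off both blow-up sequences. The paper itself says this is an ``immediate consequence'' and gives no further detail, so your write-up is in fact more explicit than the paper's.

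There is one small gap. You assert that the initial blow-up of $(X\smallsetminus f^{-1}(U))_\red$ is functorial under flat base change, but taking the reduced induced structure does \emph{not} commute with arbitrary flat base change (only with smooth or \'etale base change). Concretely, if $S'\to S$ is flat with non-reduced fibers, then $(X\smallsetminus f^{-1}(U))_\red\times_S S'$ need not equal $\bigl((X\smallsetminus f^{-1}(U))\times_S S'\bigr)_\red$, so the two blow-ups differ. This does not affect part~(ii), where only smooth base change is claimed and where the paper itself uses exactly this reduced blow-up in Section~\ref{S:functorial-etalification}. For part~(i) you should either replace the reduced center by a flat-functorial one (for instance, one built from Fitting ideals or annihilators as in Proposition~\ref{P:flatify-module}, which do commute with flat base change), or argue more carefully that the specific output $X'\cong\widetilde{S}$ is independent of this choice. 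The rest of your argument---identifying the strict transform with an iterated blow-up in the preimages of the centers once schematic density holds, and invoking Theorem~\tref{T:FLATIFICATION}---is correct.
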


\begin{theorem}[Kummer blow-ups are functorially cofinal]\label{T:Kummer-blowups-cofinal}
Let $S$ be a smooth algebraic stack over a field of characteristic zero.
Let $f\colon X\to S$ be a stack-theoretic modification, that is,
$f$ is proper, not necessarily representable, with finite diagonal,
and $f|_U$ is an isomorphism for some open substack $U\subseteq S$.
Then there exists a sequence of $U$-admissible Kummer blow-ups
$X'\to X$ such that the composition $X'\to S$ is a sequence of
$U$-admissible Kummer
blow-ups with smooth centers. The sequences are functorial with respect to
smooth base change $S'\to S$.
\end{theorem}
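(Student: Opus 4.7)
The plan is to reduce immediately to Theorem~\tref{T:ETALIFICATION} and exploit the principle, already noted in the introduction, that an \'etale stack-theoretic modification is an isomorphism.

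First, the hypotheses of Theorem~\tref{T:ETALIFICATION} are met: $S$ is smooth over a field of characteristic zero, $f$ is proper with finite diagonal, and the open substack $U$ on which $f|_U$ is an isomorphism is in particular contained in the maximal open substack on which $f$ is \'etale. Applying that theorem, we obtain a commutative square
\[
\xymatrix{
\widetilde{X}\ar[d]_{\widetilde{f}}\ar[r]^{q} & X\ar[d]^{f}\\
\widetilde{S}\ar[r]^{p} & S
}
\]
in which $\widetilde{f}$ is \'etale, $p$ is a sequence of Kummer blow-ups with smooth centers disjoint from $U$, $q$ is a sequence of Kummer blow-ups with centers disjoint from $f^{-1}(U)$, and the whole construction is functorial with respect to smooth morphisms $S'\to S$ of finite type.

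Next, I claim that $\widetilde{f}$ is itself a stack-theoretic modification. It is proper with finite diagonal, since both properties are preserved by base change along $p$. Because a Kummer blow-up is an isomorphism away from its center, and the centers of $p$ avoid $U$ while the centers of $q$ avoid $f^{-1}(U)$, the maps $p$ and $q$ restrict to isomorphisms $p^{-1}(U)\xrightarrow{\sim} U$ and $q^{-1}(f^{-1}(U))\xrightarrow{\sim}f^{-1}(U)$; commutativity of the square then identifies $\widetilde{f}|_{p^{-1}(U)}$ with $f|_U$, so $\widetilde{f}$ is an isomorphism over the open substack $p^{-1}(U)$. Since $U$ is dense in $S$ and $p$ is a proper surjection, $p^{-1}(U)$ is dense in $\widetilde{S}$. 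Thus $\widetilde{f}$ is an \'etale stack-theoretic modification, hence an isomorphism.

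Setting $X'=\widetilde{X}$ and using $\widetilde{f}$ to identify $X'$ with $\widetilde{S}$, the morphism $q\colon X'\to X$ is the required sequence of $U$-admissible Kummer blow-ups on $X$, and the composition $X'\cong \widetilde{S}\xrightarrow{p} S$ is the required sequence of $U$-admissible Kummer blow-ups with smooth centers on $S$. Functoriality with respect to smooth base change is inherited verbatim from Theorem~\tref{T:ETALIFICATION}. The only non-routine point is the middle one: confirming that $\widetilde{f}$ truly is a stack-theoretic modification so that the \'etale-implies-isomorphism principle can be invoked. This rests on the density of $p^{-1}(U)$ in $\widetilde{S}$, which in turn comes from the fact that Kummer blow-ups with centers disjoint from $U$ are isomorphisms over $U$ and that $p$ is a proper surjection.
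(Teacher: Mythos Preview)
Your approach is exactly the paper's: the statement is presented there as an immediate consequence of Theorem~\tref{T:ETALIFICATION} together with the principle that an \'etale stack-theoretic modification is an isomorphism, and you have correctly unpacked that.

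One slip to fix: you justify that $\widetilde{f}$ is proper with finite diagonal by saying these are ``preserved by base change along $p$'', but the square in Theorem~\tref{T:ETALIFICATION} is \emph{not} Cartesian (indeed, $\widetilde{X}$ is built from $X$ via Kummer blow-ups and normalizations, not as $X\times_S\widetilde{S}$). The conclusion is still easy: $f\circ q=p\circ\widetilde{f}$ is proper as a composite of proper maps, and $p$ is separated, so $\widetilde{f}$ is proper; finiteness of the diagonal is automatic since $\widetilde{f}$ is \'etale. Similarly, your density argument (``$p$ is a proper surjection'') is not the right reason in general; rather, a $U$-admissible (Kummer) blow-up of a smooth stack is an isomorphism over $U$ and does not introduce new irreducible components, so density of $U$ propagates.
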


\subsection{Resolution of the indeterminacy locus}
As a consequence we obtain resolution of the indeterminacy locus
by a functorial sequence of blow-ups.

\begin{theorem}\label{T:resolving-indeterminacy-loci:rep}
Let $S$ be a noetherian algebraic stack, let $X$ be a noetherian
$S$-stack, let $Y\to S$ be a proper morphism and let
$f\colon X\dashrightarrow Y$ be a rational map over $S$, defined on an
open substack $U\subseteq X$.
\begin{enumerate}
\item If $Y\to S$ is representable, then there exists a sequence of
$U$-admissible blow-ups $p\colon X'\to X$ such that the map
$f\circ p\colon X'\dashrightarrow Y$ is defined everywhere.
The sequence is functorial with respect to flat base change $X'\to X$.
\item If in addition, $X$ is smooth over a field of characteristic zero,
  then the map $p$ can be
  chosen as a sequence of blow-ups in smooth centers which is functorial with
  respect to smooth base change $X'\to X$.
\item If $Y\to S$ is not representable but relatively Deligne--Mumford,
  and $X$ is smooth over a field of characteristic zero, then there exists a
  sequence of $U$-admissible Kummer blow-ups $p\colon X'\to X$ such that the map
  $f\circ p\colon X'\dashrightarrow Y$ is defined everywhere. The sequence is
  functorial with respect to smooth base change $X'\to X$.
\end{enumerate}
\end{theorem}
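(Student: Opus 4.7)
The approach is to reduce all three cases to the cofinality theorems (Theorems~\ref{T:blowups-cofinal} and~\ref{T:Kummer-blowups-cofinal}) via a graph construction. The rational map $f$ is represented by an $S$-morphism $f_U\colon U\to Y$. Since $Y\to S$ is proper and hence separated, the graph morphism $(\mathrm{id},f_U)\colon U\inj X\times_S Y$ is a closed immersion; let $\Gamma$ be its scheme-theoretic closure in $X\times_S Y$ and let $\pi_1\colon \Gamma\to X$ and $\pi_2\colon \Gamma\to Y$ be the projections. Properness of $Y\to S$ gives properness of $X\times_S Y\to X$, hence of $\pi_1$, and by construction $\pi_1$ restricts to an isomorphism over $U$. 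Thus $\pi_1$ is a stack-theoretic modification, and $\pi_2$ already defines an extension of $f$ to $\Gamma$. It therefore suffices to produce a sequence of $U$-admissible (Kummer) blow-ups $p\colon X'\to X$ factoring through $\pi_1$; the composition $\pi_2\circ(X'\to\Gamma)$ then defines $f\circ p$ on all of $X'$.

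In case (i) the assumption that $Y\to S$ is representable implies that $X\times_S Y\to X$, and hence $\pi_1$, is representable, so $\pi_1$ is a modification in the sense of Theorem~\ref{T:blowups-cofinal}(i). That theorem yields the desired functorial sequence of $U$-admissible blow-ups $X'\to\Gamma$ whose composition to $X$ is a sequence of $U$-admissible blow-ups. For case (ii), the smoothness of $X$ over a field of characteristic zero allows us to invoke Theorem~\ref{T:blowups-cofinal}(ii), giving smooth centers and functoriality under smooth base change. For case (iii), $Y\to S$ is proper with unramified diagonal, hence has finite diagonal; a closed substack of $X\times_S Y$ inherits finiteness of diagonal over $X$, so $\pi_1\colon \Gamma\to X$ is a stack-theoretic modification with finite diagonal. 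Applying Theorem~\ref{T:Kummer-blowups-cofinal} to $\pi_1$ gives a functorial sequence of $U$-admissible Kummer blow-ups with smooth centers factoring through $\pi_1$.

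The main thing to verify is that the graph construction is compatible with the relevant base change so that functoriality transfers from the cofinality theorems to the statement about rational maps. Given a flat (resp.\ smooth) morphism $X'\to X$, pulling back yields $\Gamma\times_X X'\subseteq X'\times_S Y$, and since formation of the scheme-theoretic image commutes with flat base change, this pullback coincides with the scheme-theoretic closure of $(U\times_X X')\inj X'\times_S Y$, which is precisely the graph attached to the pulled-back rational map. Thus the cofinality algorithms, which are functorial on the modification $\Gamma\to X$, descend to functorial algorithms on $f$. This base-change compatibility, together with the verification that $\pi_1$ has in each case the regularity properties (representable; finite diagonal) demanded by the invoked cofinality theorem, is the only genuine content of the argument; all remaining work is routine assembly.
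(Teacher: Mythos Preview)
Your treatment of cases (i) and (ii) is correct and matches the paper's proof essentially verbatim: graph closure, then invoke Theorem~\ref{T:blowups-cofinal}.

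The gap is in case (iii). When $Y\to S$ is not representable, the diagonal $\Delta_{Y/S}$ is finite (proper and unramified) but \emph{not} a monomorphism, hence not a closed immersion. Since the graph $(\mathrm{id},f_U)\colon U\to U\times_S Y$ is a base change of $\Delta_{Y/S}$, it is only a finite morphism in this case, contrary to your opening claim. You can still form the scheme-theoretic image $\Gamma$ of $U$ in $X\times_S Y$, but then $\pi_1\colon\Gamma\to X$ is typically \emph{not} an isomorphism over $U$, so it is not a stack-theoretic modification and Theorem~\ref{T:Kummer-blowups-cofinal} does not apply. Concretely, take $S=X=U=\Spec k$, $Y=B\mu_n$, and $f\colon\Spec k\to B\mu_n$ the canonical atlas: then $U\times_S Y=B\mu_n$, the graph is $f$ itself, and since $f$ is faithfully flat its scheme-theoretic image is all of $B\mu_n$; thus $\pi_1$ is $B\mu_n\to\Spec k$, not an isomorphism.

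The paper handles case (iii) by replacing the closure with the \emph{normalization} $W$ of $X\times_S Y$ in $U$ along the (finite) graph morphism. Because $U$ is smooth, hence normal, one has $W|_{U\times_S Y}\cong U$, so $W\to X$ is an isomorphism over $U$; and because $X\times_S Y$ is of finite type over a field, $W\to X\times_S Y$ is finite, so $W\to X$ is proper with finite diagonal. One then applies Theorem~\ref{T:Kummer-blowups-cofinal} to $W\to X$. In the toy example above this gives $W=\Spec k$ and $W\to X$ the identity, as expected.
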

\begin{proof}
Consider the proper morphism $X\times_S Y\to X$ which has a section over $U$
induced by $f$. In case (i) and (ii), the section is a closed immersion and
we let $W$ be the closure of its image. This gives us a modification $W\to X$
and Theorem~\ref{T:blowups-cofinal} gives us a functorial sequence of blow-ups
$p\colon X'\to X$ such that $f\circ p$ factors as $X'\to W\to Y$ and thus
is defined everywhere.

In case (iii), the section over $U$ is a finite morphism $U\to U\times_S Y$ and
we let $W$ be the normalization of $X\times_S Y$ in $U$. Then $W\to X\times_S
Y$ is finite because $X\times_S Y$ is of finite type over a field and $U$ is
smooth. The composition $W\to X$ is a stack-theoretic modification with finite
diagonal. We now conclude as before using
Theorem~\ref{T:Kummer-blowups-cofinal}.
\end{proof}

\begin{remark}[Birational case]\label{R:birational}
If in addition $X\to S$ is proper and $f$ is birational, then $X'\to Y$
becomes a (stack-theoretic) modification and we can apply
Theorem~\ref{T:blowups-cofinal} or~\ref{T:Kummer-blowups-cofinal} to obtain
a sequence of (Kummer) blow-ups $X''\to X'$ (not necessarily in smooth centers)
such that $X''\to Y$ becomes a sequence of (Kummer) blow-ups.

In the smooth case, we can then continue by taking a sequence of (Kummer)
blow-ups $X'''\to X''$ such that the composition $X'''\to X'$ is a sequence of
(Kummer) blow-ups in smooth centers. But we cannot simultaneously arrange so
that the blow-up sequence $X'''\to Y$ has smooth centers: this would amount to
the strong factorization conjecture.
\end{remark}

\subsection{A general Chow lemma}
Our last application is a Chow lemma. In contrast to the other results,
this is \emph{not} functorial,
since the starting point is a quasi-projective open substack and there is
neither a unique maximal quasi-projective open substack, nor a functorial
projective compactification of this open substack. To which extent there
could be a functorial Chow lemma is not clear to the author.

\begin{theorem}\label{T:Chow-lemma}
Let $S$ be a noetherian algebraic stack and let $f\colon X\to S$ be a
representable proper morphism. If there exist an open substack $U\subseteq X$
such that $f|_U$ is quasi-projective, then there exists a sequence of
$U$-admissible blow-ups $X'\to X$ such that $X'\to S$ is projective.
\end{theorem}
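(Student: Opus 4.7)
The plan is to mimic the classical proof of Chow's lemma via graph closure, and then apply Theorem~\ref{T:blowups-cofinal} to replace the resulting modifications by sequences of blow-ups. First we reduce to the case where $U$ is schematically dense in $X$: if it is not, we simply replace $X$ by the schematic closure of $U$, which is legitimate since the theorem is not claimed to be functorial. Next, since $f|_U$ is quasi-projective, there is a locally closed immersion $U \hookrightarrow \PP(\mathcal{E})$ for some coherent $\sO_S$-module $\mathcal{E}$; we let $\bar U$ denote the schematic closure of $U$ in $\PP(\mathcal{E})$. This produces a projective $S$-stack $\bar U$ together with a schematically dense open immersion $U \hookrightarrow \bar U$.

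The key construction is the graph closure. Consider the $S$-morphism $\gamma\colon U \to X \times_S \bar U$ sending $u$ to $(u,u)$, and let $Y := \overline{\gamma(U)}$ be its scheme-theoretic closure in $X \times_S \bar U$. The two projections give representable proper morphisms $\pi_X\colon Y \to X$ and $\pi_{\bar U}\colon Y \to \bar U$, each of which restricts to an isomorphism over $U$; since $U$ is dense in both $X$ and $\bar U$, both projections are modifications. Applying Theorem~\ref{T:blowups-cofinal} to $\pi_X$ produces a sequence of $U$-admissible blow-ups $Y_1 \to Y$ such that the composition $Y_1 \to X$ is itself a $U$-admissible blow-up sequence. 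The composition $Y_1 \to \bar U$ is still a $U$-admissible modification, so a second application of Theorem~\ref{T:blowups-cofinal} yields a sequence of $U$-admissible blow-ups $Y_2 \to Y_1$ such that $Y_2 \to \bar U$ is also a $U$-admissible blow-up sequence.

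Setting $X' := Y_2$, the composition $X' \to Y_1 \to X$ is a composition of $U$-admissible blow-up sequences, hence itself such a sequence; meanwhile $X' \to \bar U$ is a blow-up sequence of the projective stack $\bar U$ and therefore remains projective, and composing with the projective morphism $\bar U \to S$ gives the desired projectivity of $X' \to S$. The main obstacle, and the reason the statement is explicitly non-functorial, is that there is no canonical projective compactification $\bar U$ of $U$: once such a $\bar U$ is chosen the graph closure performs the rest of the argument mechanically, but both the reduction to $U$ dense and the selection of $\bar U$ involve non-canonical choices.
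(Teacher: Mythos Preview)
Your argument is essentially the same as the paper's: embed $U$ in some $\PP(\mathcal{E})$, take the closure $\bar U$, form the graph closure in $X\times_S\bar U$, and apply Theorem~\ref{T:blowups-cofinal} twice to turn both projections into blow-up sequences. The paper packages the first application as Theorem~\ref{T:resolving-indeterminacy-loci:rep}(i) and the second as Remark~\ref{R:birational}, but unwound these are exactly your steps.

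There is one slip in your reduction step. Replacing $X$ by the schematic closure $\overline{U}$ of $U$ does not do what you want: the closed immersion $\overline{U}\hookrightarrow X$ is not a blow-up, so a blow-up sequence $X'\to\overline{U}$ does not yield the required blow-up sequence $X'\to X$. The correct move is instead to \emph{blow up} $X$ in the reduced complement $(X\smallsetminus U)_{\red}$. This is a single $U$-admissible blow-up, and since the exceptional locus is an effective Cartier divisor, its complement $U$ becomes schematically dense in the blow-up. With this fix your graph-closure argument goes through verbatim. (The paper sidesteps the issue by calling $X\dashrightarrow\bar U$ a ``birational map'', tacitly assuming $U$ dense.)
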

\begin{proof}
Let $\sL$ be an $S$-ample line bundle on $U$. There is a, non-canonical,
coherent $\sO_S$-module $\sF$
and an immersion $j\colon U\inj \PP(\sF)$ such that $\sL=j^*\sO(1)$,
cf.\ \cite[Thm.~8.6(i)]{rydh_approximation-sheaves}. The theorem follows
from Theorem~\ref{T:resolving-indeterminacy-loci:rep}(i) and
Remark~\ref{R:birational} applied to the birational map
$X\dashrightarrow \overline{U}\inj \PP(\sF)$.
\end{proof}

\begin{proposition}
Let $S$ be a noetherian algebraic stack and let 
$f\colon X\to S$ be a representable and separated morphism of finite type.
Assume that
\begin{enumerate}
\item $S$ has quasi-finite and separated diagonal, or
\item $S$ has affine stabilizers and $X$ is reduced, or
\item $S$ has affine stabilizers and the generic points of $X$ has
  linearly reductive stabilizers.
\end{enumerate}
Then there exists an open dense $U\subseteq X$ such that $f|_U$ is
quasi-projective.
\end{proposition}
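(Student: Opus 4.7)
The plan is to reduce each of the three cases to the classical fact that a separated finite-type morphism of noetherian algebraic spaces is quasi-projective on a dense open of the source, and then descend. As a first reduction, ample line bundles on disjoint opens combine into a single ample bundle; so it suffices to treat each generic point $\eta \in |X|$ separately, producing an open neighborhood $V$ of $\eta$ on which $f$ is quasi-projective, and then take a disjoint union of such neighborhoods over the (finitely many) generic points.

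For case (i), I note that representability of $f$ transfers the quasi-finite and separated diagonal of $S$ to $X$: $\Delta_{X/S}$ is a monomorphism of algebraic spaces, and $X \times_S X \to X \times X$ is the base change of $\Delta_S$, so $\Delta_X$ inherits both properties. I would then apply Rydh's étale dévissage for stacks with quasi-finite separated diagonal to obtain, after shrinking $S$ around $s = f(\eta)$, a finite flat surjection $p\colon Y \to S$ with $Y$ a noetherian algebraic space. The base change $X_Y \to Y$ is a separated finite-type morphism of algebraic spaces, to which generic quasi-projectivity applies, yielding a dense open $W \subseteq X_Y$ with $W \to Y$ quasi-projective. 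Since $p_X\colon X_Y \to X$ is finite, $V := X \smallsetminus p_X(X_Y \smallsetminus W)$ is open with $p_X^{-1}(V) \subseteq W$; it is dense because every preimage in $X_Y$ of a generic point of $X$ lies in $W$. The $Y$-ample bundle on $p_X^{-1}(V)$ then descends to an $S$-ample bundle on $V$ via the norm construction along the finite flat $p$.

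For cases (ii) and (iii), the starting point is the Alper--Hall--Rydh local structure theorem: since $S$ has affine stabilizers, $S$ is étale-locally near $s$ of the form $[T/\mathrm{GL}_n]$ with $T$ an affine scheme. Pulling $f$ back, $X_T := X \times_S T$ is a $\mathrm{GL}_n$-equivariant separated finite-type algebraic space over $T$, and generic quasi-projectivity for algebraic spaces supplies a dense open $W \subseteq X_T$ with $W \to T$ quasi-projective. I would replace $W$ by its $\mathrm{GL}_n$-saturation (removing the closure of the complement under the $\mathrm{GL}_n$-action; still dense, still quasi-projective over $T$ after a further shrinking), so that it arises as the preimage of an open substack $V \subseteq X$. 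Descending the ample bundle to $V$ requires a $\mathrm{GL}_n$-linearization on a dense $\mathrm{GL}_n$-invariant open. In case (iii), linear reductivity of the generic stabilizer of $\eta$ supplies such a linearization by a Reynolds-operator averaging argument after passing to a tensor power. In case (ii), the reducedness of $X$ forces the generic stabilizer of the $\mathrm{GL}_n$-action to factor through a reduced closed subgroup scheme; one can then build the linearization on a dense open by a similar averaging on a suitable power of the line bundle.

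The main obstacle is the final descent step: producing, in each case, the equivariant or descent-compatible structure on the ample line bundle of the chosen open. Generic quasi-projectivity for noetherian algebraic spaces is essentially classical, but transferring a relatively ample bundle through a finite flat cover (case (i)) or through a $\mathrm{GL}_n$-atlas (cases (ii) and (iii)) is exactly what forces the three distinct hypotheses on the diagonal, on reducedness, and on linearly reductive stabilizers: each one is tailored to guarantee that an averaging or norm construction produces a line bundle that remains ample after descent.
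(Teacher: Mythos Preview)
There is a genuine gap in your treatment of case~(ii). The Alper--Hall--Rydh local structure theorem requires the relevant point to have \emph{linearly reductive} stabilizer; a stack with merely affine stabilizers is not in general \'etale-locally of the form $[T/\mathrm{GL}_n]$. Your descent step is equally problematic: building a $\mathrm{GL}_n$-linearization by ``averaging on a suitable power'' presupposes a Reynolds operator, which exists precisely for linearly reductive groups---reducedness of $X$ provides no such mechanism. Case~(iii) has a related mismatch: the hypothesis is on stabilizers of generic points of $X$, but you apply the local structure theorem to $S$ near $s=f(\eta)$, whose stabilizer (into which the stabilizer of $\eta$ merely injects, since $f$ is representable) need not be linearly reductive.

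The paper's route is quite different. For~(ii) it reduces, by approximation, to producing an $S$-ample line bundle on each residual gerbe $\sG_x$ at a generic point; reducedness of $X$ is used exactly here, so that the local stack at $x$ \emph{is} the gerbe and no further extension is needed. After factoring through $\sG_s$ (quasi-affine over $S$) and neutralizing both gerbes over a finite field extension, the question becomes whether $BH\to BG$ is quasi-projective for a closed subgroup $H$ of an affine algebraic group $G$. This is settled by Chevalley's theorem: $H$ is the stabilizer of a line in some $G$-representation $V$, giving a locally closed immersion $BH\hookrightarrow [\PP(V)/G]$---no linear reductivity required. Case~(iii) then uses~(ii) to get the line bundle on $\sG_x$ and extends it to the possibly non-reduced $X_x$ by invoking linear reductivity of the stabilizer to kill the $\Ext^2$ obstruction, after which it spreads out. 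For~(i), your norm strategy is plausible in outline, though the paper argues more directly: it applies the quasi-finite flat presentation to $X$ (not to $S$), shrinks $X$ to make the presentation finite, and concludes that $U\to S$ is quasi-affine via a descent lemma, avoiding both the norm and the delicate question of where one may shrink $S$.
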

\begin{proof}
(i) If $S$ has quasi-finite and separated diagonal, then so has $X$ so there
  is a quasi-finite flat presentation $p\colon X'\to X$ where $X'$ is an affine
  scheme~\cite[Thm.~7.1]{rydh_etale-devissage}.
  There is a dense open substack $U\subseteq X$ such that $p|_U$ is
  finite. The morphism $p^{-1}(U)\to S$ is quasi-affine since $p^{-1}(U)$
  is quasi-affine and $S$ has quasi-affine diagonal. It follows that $U\to S$ is
  quasi-affine~\cite[Lem.~C.1]{rydh_etale-devissage}.

(ii) By standard approximation techniques, it is enough to show that for every
  generic point $x\in |X|$, the residual gerbe $\sG_x$ admits an $S$-ample
  line bundle.
  We have a factorization $\sG_x\to \sG_s\inj S$ where $s=f(x)$. Since
  $\sG_s\inj S$ is quasi-affine~\cite[Thm.~B.2]{rydh_etale-devissage},
  it is enough to show that $\sG_x$ admits a $\sG_s$-ample line bundle.

  We have a factorization $\sG_x\to \sG_s\times_{\kappa(s)} \kappa(x)\to \sG_s$.
  Since the second map is affine, we can replace $\sG_s$ with
  $\sG_s\times_{\kappa(s)} \kappa(x)$ and assume
  that $\kappa(x)=\kappa(s)$.  Let $k/\kappa(s)$ be a finite field extension
  that neutralizes both gerbes.  Then it is enough to prove that
  $\sG_x\times_{\kappa(s)} k\to \sG_s\times_{\kappa(s)} k$ is quasi-projective
  because the norm of a $\sG_s\times_{\kappa(s)} k$-ample line bundle on
  $\sG_x\times_{\kappa(s)} k$ gives
  a $\sG_s$-ample line bundle on $\sG_x$~\cite[Prop.~6.6.1]{egaII}.

  It is thus enough
  to show that if $G$ is an algebraic group scheme over $k$ and $H$ is a closed
  subgroup, then $BH\to BG$ is quasi-projective. By a theorem of Chevalley,
  there exists a finite-dimensional $G$-representation $V$ and a
  $1$-dimensional subspace $L\subseteq V$ whose stabilizer is $H$ (as a group
  scheme)~\cite[Thm.~4.27]{milne_algebraic-groups}. This gives a
  monomorphism $BH\to [\PP(V)/G]$ which is a locally closed
  immersion~\cite[Prop.~1.65(b)]{milne_algebraic-groups}. It follows that
  $BH\to BG$ is quasi-projective.

(iii) Let $x\in |X|$ be a generic point. The intersection of all open
  neighborhoods of $x$ is a $1$-point stack $X_x$ and $(X_x)_\red=\sG_x$.
  By (ii), $\sG_x$ carries an $S$-ample line bundle $\sL$. Since $\sG_x$ has
  linearly reductive stabilizer, it has cohomological dimension zero.
  In particular, if $I$ is the ideal sheaf of $\sG_x\inj X_x$, then
  $\Ext^2(\sG_x,\sL^\vee\otimes \sL\otimes I^n/I^{n+1})=0$ for all $n$.
  Thus, the obstruction for extending the line bundle $\sL$ to $X_x$ vanishes.
  An extension to $X_x$ is also $S$-ample~\cite[Prop.~4.6.15]{egaII} and
  it spreads out to an $S$-ample line bundle on some open neighborhood of $x$.
\end{proof}

\end{section}


\bibliography{functorial-flatification}
\bibliographystyle{dary}

\end{document}